\newtheorem{algorithm}{Algorithm}
\begin{document}
\newcommand{\xfill}[2][1ex]{{%
  \dimen0=#2\advance\dimen0 by #1
  \leaders\hrule height \dimen0 depth -#1\hfill%
}}
\title{New inertial algorithm for a class of equilibrium problems
}

\titlerunning{New inertial algorithm for solving strongly pseudomontone EPs}        

\author{Dang Van Hieu
}

\authorrunning{Dang Van Hieu} 
\institute{Dang Van Hieu \at              
              Applied Analysis Research Group, Faculty of Mathematics and Statistics, \\
Ton Duc Thang University, Ho Chi Minh City, Vietnam \\
              \email{dangvanhieu@tdt.edu.vn}           
            }

\date{Received: date / Accepted: date}

\maketitle

\begin{abstract}
The article introduces a new algorithm for solving a class of equilibrium problems involving strongly pseudomonotone bifunctions with a 
Lipschitz-type condition. We describe how to incorporate the proximal-like regularized technique with inertial effects. The main novelty 
of the algorithm is that it can be done without previously knowing the information on the strongly pseudomonotone and Lipschitz-type constants 
of cost bifunction. A reasonable explain for this is that the algorithm uses a sequence of stepsizes which is diminishing and non-summable. 
Theorem of strong convergence is proved. In the case, when the information on the modulus of strong pseudomonotonicity and Lispchitz-type 
constant is known, the rate of linear convergence of the algorithm has been established. Several of experiments are performed to illustrate 
the numerical behavior of the algorithm and also compare it with other algorithms.
\keywords{Proximal-like method \and Regularized method \and Equilibrium problem \and Strongly pseudomonotone bifunction 
\and Lipschitz-type bifunction}
\subclass{65J15 \and  47H05 \and  47J25 \and  47J20 \and  91B50.}
\end{abstract}

\section{Introduction}\label{intro}
The equilibrium problem (briefly, EP) \cite{BO1994,MO1992} is well known as the Ky Fan inequality early studied in \cite{F1972,NI55}. Mathematically, 
problem (EP) can be considered as a generalization of many mathematical models such as variational inequality problems, optimization problems, fixed 
point problems, complementarity problems and Nash equilibrium problems, see, e.g., \cite{BO1994,FP2002,K2007,MO1992}. So, problem (EP) 
becomes an attractive field in mathematics as well as in applied sciences. In recent years, problem (EP) has been widely studied in both theoretically and 
algorithmically. Some methods for solving problem (EP) can be found, for instance, in
\cite{A2013,A1995,BCCP13,BCP09,CH2005,FA1997,HMA2016,H2017COAP,H2016c,H2016e,H2017,H2016,K2003,K2007,M2000,M2003,M1999,MQ2009,QMH2008,SS2011,SNN2013}. 
One of the most popular methods for solving problem (EP) is the proximal point method (PPM). This method was 
first introduced by Martinet \cite{M1970} for monotone variational inequality problems and after that it was extended by Rockafellar 
\cite{R1976} to monotone operators. Moudafi \cite{M1999} extended further the PPM to EPs for monotone bifunctions. In \cite{K2003}, Konnov 
also introduced another version of the PPM with weaker assumptions.\\[.1in]
Another notable class of solution methods for solving problem (EP) is given by the so-called descent methods \cite{KA2006,KP2003}. 
They are based on the reformulation of the problem (EP) as a global optimization problem through the gap function or D-gap function and the 
regularization technique. The computations in these approaches often consist of evaluating the gap function at a point and searching the optimization 
direction based on the exact solution of a convex optimization problem. In recent years, the descent-like methods have been widely and intensively 
investigated under various types of weaker assumptions imposed on feasible set and cost bifunction, and also to reduce the computational complexity 
of algorithms, see, e.g., \cite{BP2015,BP2012,C2013,DPS2014}.\\[.1in]
Now, we are interested in a method, which is based on the auxiliary problem principle, was early introduced in \cite{FA1997} and its convergence 
was also studied. Recently, the authors in 
\cite{QMH2008} have further extended and investigated the convergence of it under different assumptions that equilibrium bifunctions are 
pseudomonotone and satisfy a certain Lipschitz-type condition \cite{M2000}. The method in \cite{FA1997,QMH2008} was also called the extragradient 
method due to the results of Korpelevich \cite{K1976} on saddle point problems. Another similar method, which is called the two-step proximal method, has 
been recently considered by the authors in \cite{LS2016}. The main advantage of this method is that it only requires to proceed a value of bifunction at the 
current approximation. Its convergence was also established under the hypotheses of pseudomonotonicity and Lipschitz-type condition of bifunctions. 
In recent years, many iterative methods based on the extragradient-like methods have been proposed for solving problem (EP) under various types of 
conditions, see, for instance \cite{HMA2016,H2016c,H2016e,LS2016,SVN2015} and the references therein.\\[.1in]
It is emphasized here that the aforementioned extragradient-like methods often use stepsizes which depend on Lipschitz-type constants of equilibrium 
bifunctions. This means that the Lipschitz-type constants must be the input parameters of used method, and so the prior knowledge of these constants 
is a requirement in actual fact for constructing sequences of solution approximations. That fact can make some restrictions in applications 
because the Lipschitz-type constants are often unknown or difficult to approximate. Very recently, the works \cite{H2017AA,H2017NUMA} have 
introduced the two extragradient-like methods (with two proximal-like steps over iteration) for solving strongly pseudomonotone and Lipschitz-type equilibrium problems 
where their main advantage is that they can be done without the prior knowledge of Lipschitz-type constants and of the modulus of strong pseudomonotonicity. \\[.1in]
In this paper, we introduce continuously a new algorithm for solving problem (EP) involving strongly pseudomonotone bifunctions with a Lipschitz-type condition. 
As in \cite{H2017AA,H2017NUMA}, the new algorithm also can be performed in the case the information on strongly pseudomonotone and Lipschitz-type 
constant is unknown. This comes from a fact that the algorithm has used a variable sequence of stepsizes which is diminishing and non-summable. A theorem 
of strong convergence is proved. In the case, when the modulus of strong pseudomonotonicity and Lipschitz-type constants of cost bifunction are known, the rate of linear 
convergence of the algorithm is established. A notable difference in comparison with the extragradient-like methods in \cite{H2017AA,H2017NUMA} is that the proposed 
algorithm only uses a proximal-like regularized step per each iteration. In addition, the regularized step in the algorithm has been combined with inertial effects 
which has been studied recently by several authors, see, for instance, in \cite{AA2001,A2004,BCL2016,MM2008,Mo2003} and the references therein. 
As the results in \cite{AA2001,A2004,BCL2016,MM2008,Mo2003}, the extrapolation inertial term is intended to speed up the convergence properties. 
The main advantages of the new algorithm in this paper have been also confirmed by several numerical results.  \\[.1in]
The remainder of this paper is organized as follows: In Sect. \ref{pre} we recall some definitions and preliminary results used in the paper. 
Sect. \ref{main} introduces in details the inertial regularized algorithm and gives an estimate on the sequence generated by the algorithm. 
Sect. \ref{IRM1} analyzes the convergence of the algorithm in the case the strongly pseudomonotone and Lipschitz-type constants are 
unknown. When these constants are known, we will establishe the rate of linear convergence of the algorithm in Sect. \ref{IRM2}. 
Finally, in Sect. \ref{example} we compare the numerical behavior of the new algorithm with the regularized algorithm (without inertial effect) 
and the extragradient-like ones having the same features proposed in \cite{H2017AA,H2017NUMA}.
\section{Preliminaries}\label{pre}
The paper concerns about solving an equilibrium problem in a real Hilbert space $H$. Let $C$ be a nonempty closed convex subset of $H$ and let $f$ be 
a bifunction from $H\times H$ to the set of real numbers $\Re$ such that $f(x,x)=0$ for all $x\in C$. Recall that the equilibrium problem (EP) for the 
bifunction $f$ on $C$ is to find $x^*\in C$ such that
$$
f(x^*,y)\ge 0,~\forall y\in C.
\eqno{\rm (EP)}
$$
Solution methods for solving problem (EP) are often relative to theory of monotonicity of an operator or a bifunction. Now, we recall some concepts 
of monotonicity of a bifunction, see \cite{BO1994,MO1992} for more details. 
A bifunction $f:H\times H\to \Re$ is called:\\[.1in]
(i) \textit{strongly monotone} on $C$ if there exists a constant $\gamma>0$ such that
$$ f(x,y)+f(y,x)\le -\gamma ||x-y||^2,~\forall x,y\in C; $$
(ii) \textit{monotone} on $C$ if 
$$ f(x,y)+f(y,x)\le 0,~\forall x,y\in C; $$
(iii) \textit{pseudomonotone} on $C$ if 
$$ f(x,y)\ge 0 \Longrightarrow f(y,x)\le 0,~\forall x,y\in C;$$
(iv) \textit{strongly pseudomonotone} on $C$ if there exists a constant $\gamma>0$ such that
$$ f(x,y)\ge 0 \Longrightarrow f(y,x) \le-\gamma ||x-y||^2,~\forall x,y\in C.$$
It is easy to see from the aforementioned definitions that the following implications hold,
$$ {\rm (i)}\Longrightarrow {\rm (ii)}\Longrightarrow {\rm (iii)}~{\rm and}~{\rm (i)}\Longrightarrow {\rm (iv)}\Longrightarrow {\rm (iii)}.$$
The converses in general are not true. We say that a bifunction $f:H\times H\to \Re$ satisfies \textit{Lipschitz-type condition} if there exists a 
real number $L>0$ such that 
$$
f(x,y) + f(y,z) \geq f(x,z) -L||x-y|| ||y-z||, ~ \forall x,y,z \in H. \eqno{\rm (LC)}
$$
Note that if $A:H\to H$ is a Lipschitz continuous operator, i.e., there exists $L>0$ such that $||Ax-Ay||\le L||x-y||$ for all $x,y\in H$, then the bifunction 
$f(x,y)=\left\langle Ax,y-x\right\rangle$ satisfies the Lipschitz-type condition (LC) with the constant $L$. Indeed, we have that $f(x,y) + f(y,z) - f(x,z) = 
\left\langle Ay-Ax,y-z\right\rangle\ge -||Ay-Ax||||y-z||\ge -L||y-x||||y-z||$. Thus, condition (LC) holds for $f$.
\begin{remark}\label{rem1}
The Lipschitz-type condition (LC) implies the following condition which is called the \textit{Lipschitz-type condition in the sense of Mastroeni} \cite{M2000},
$$ f(x,y) + f(y,z) \geq f(x,z) - c_1||x-y||^2 - c_2||y-z||^2, ~ \forall x,y,z \in H, \eqno{\rm (MLC)}$$
where $c_1>0,~c_2>0$ are two given constants. Indeed, if condition (LC) holds then by the following relation
$$
\left(\sqrt{\frac{L}{2\mu}}||x-y||-\sqrt{\frac{L\mu}{2}}||y-z||\right)^2\ge 0,
$$
we have 
$$ f(x,y) + f(y,z) \geq f(x,z) -L||x-y|| ||y-z||\ge f(x,z) - \frac{L}{2\mu}||x-y||^2-\frac{L\mu}{2}||y-z||^2 $$
for any $\mu>0$. This means that the Lipschitz-type condition of Mastroeni (MLC) in \cite{M2000} holds for the bifunction $f$ with $c_1=\frac{L}{2\mu}$ and $c_2=\frac{L\mu}{2}$.
\end{remark}
 Throughout this paper, for solving problem (EP), we assume that bifunction $f:H\times H\to \Re$ satisfies the following conditions:\\[.1in]
(A1) $f(x,x)=0$ for all $x\in C$;\\[.1in]
(A2) $f$ is strongly pseudomonotone on $C$ with some constant $\gamma$;\\[.1in]
(A3) $f$ satisfies the Lipschitz-type condition (LC) on $H$ with some constant $L$;\\[.1in]
(A4) $f(x,.)$ is convex and lower semicontinuous and $f(.,y)$ is hemicontinuous on $C$.\\[.1in]
Note that, under hypotheses (A2) and (A4), problem (EP) has an unique solution, denoted by $x^*$. 
The Lipschitz-type conditions are often used in establishing the convergence of extragradient-like methods for EPs, see, e.g., 
\cite{HMA2016,H2016c,H2016e,LS2016,QMH2008,SVN2015}. Recall that a function $h:C\to \Re$ is called hemicontinuous on 
$C$ if $\lim\limits_{t \to 0} h(tz + (1-t)x) = h(x)$ for all $x,~z\in C$.
The proximal mapping of a proper, convex and lower semicontinuous function $g:C\to \Re$ with a parameter $\lambda>0$ is defined by
$${\rm prox}_{\lambda g}(x)=\arg\min\left\{\lambda g(y)+\frac{1}{2}||x-y||^2:y\in C\right\},~x\in H. $$
The following is a property of the proximal mapping, see \cite{BC2011} for more details.
\begin{lemma}\label{prox}For all $x\in H,~y\in C$ and $\lambda>0$, the following inequality holds,
$$\lambda \left\{g(y)-g({\rm prox}_{\lambda g}(x))\right\}\ge \left\langle x-{\rm prox}_{\lambda g}(x),y-{\rm prox}_{\lambda g}(x)\right\rangle.$$
\end{lemma}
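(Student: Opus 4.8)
The plan is to unwind the definition of the proximal mapping as the minimizer of a strongly convex functional and read off the stated inequality from the first-order optimality condition. Write $p = {\rm prox}_{\lambda g}(x)$, so that $p$ minimizes $\phi(y) = \lambda g(y) + \frac{1}{2}\|x - y\|^2$ over the convex set $C$. Since $g$ is convex and the quadratic term is convex, $\phi$ is convex, and the whole content of the argument is to turn the statement ``$p$ is a minimizer of $\phi$ over $C$'' into a variational inequality of the required form.

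First I would exploit the convexity of $C$ by taking, for a fixed $y \in C$ and $t \in (0,1]$, the admissible competitor $p + t(y - p) = (1-t)p + ty \in C$. Minimality then gives $\phi(p) \le \phi(p + t(y-p))$. Expanding the quadratic term via $\frac{1}{2}\|x - p - t(y-p)\|^2 = \frac{1}{2}\|x-p\|^2 - t\langle x - p, y - p\rangle + \frac{t^2}{2}\|y-p\|^2$ and bounding the convex term by $g((1-t)p + ty) \le (1-t)g(p) + t g(y)$, the inequality $\phi(p) \le \phi(p+t(y-p))$ reduces, after cancelling the common terms $\lambda g(p)$ and $\frac{1}{2}\|x-p\|^2$, to $0 \le t\lambda\{g(y) - g(p)\} - t\langle x - p, y - p\rangle + \frac{t^2}{2}\|y - p\|^2$. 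Dividing by $t > 0$ and letting $t \downarrow 0$ annihilates the quadratic remainder and leaves $\langle x - p, y - p\rangle \le \lambda\{g(y) - g(p)\}$, which is precisely the claimed inequality.

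An equivalent route would invoke the optimality condition $0 \in \lambda\partial g(p) + (p - x) + N_C(p)$ directly, where $N_C(p)$ is the normal cone: choosing $\xi \in \partial g(p)$ and $\nu \in N_C(p)$ with $\lambda\xi + p - x + \nu = 0$, the subgradient inequality $g(y) - g(p) \ge \langle\xi, y - p\rangle$ together with $\langle\nu, y - p\rangle \le 0$ yields the conclusion in a single line. I expect no genuine obstacle in either approach; the only points requiring mild care are the passage to the limit $t \downarrow 0$ (immediate, since after division only the $\frac{t}{2}\|y-p\|^2$ term still depends on $t$) and, if one prefers the subdifferential formulation, the appeal to the subdifferential sum rule and the existence of the decomposition, both of which are standard for a proper convex lower semicontinuous $g$ on a closed convex $C$.
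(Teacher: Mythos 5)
Your proof is correct. Note that the paper does not actually prove this lemma: it simply states it and refers the reader to Bauschke--Combettes \cite{BC2011}, so there is no in-paper argument to compare against. Your first route (comparing $\phi(p)$ with $\phi(p+t(y-p))$ for the convex competitor $p+t(y-p)\in C$, cancelling, dividing by $t$ and letting $t\downarrow 0$) is the standard elementary derivation and is carried out cleanly; the only hypotheses used are convexity of $C$ and of $g$, which the paper assumes. Your second route via $0\in\lambda\partial g(p)+(p-x)+N_C(p)$ is also valid and is in fact the same mechanism the author later re-derives inline in the proof of Lemma 3.1 (there the optimality condition $0\in\partial f_n(x_{n+1})+N_C(x_{n+1})$ is combined with strong convexity of $f_n$); as you note, it needs the subdifferential sum rule, which holds here because the quadratic term is continuous. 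Either argument fully establishes the lemma.
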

\begin{remark}\label{rem2}
From Lemma \ref{prox}, it is easy to show that if $x={\rm prox}_{\lambda g}(x)$ then 
$$x\in {\rm arg}\min\left\{g(y):y\in C\right\}:=\left\{x\in C: g(x)=\min_{y\in C}g(y)\right\}.$$
\end{remark}
The following technical lemma will be used to prove theorem of convergence in Sect. \ref{IRM1}.
\begin{lemma}\cite{AA2001}\label{lem.AA2001}  Let $\{\Phi_n\} $, $\{\Delta_n\}$ and $\{\theta_n\}$ be sequences in $[0,+\infty)$ such that 
$$
\Phi_{n+1}\le \Phi_n+\theta_n(\Phi_n-\Phi_{n-1})+\Delta_n,~ \forall n\geq 1,\quad \sum_{n=1}^{+\infty} \Delta_n<+\infty,  
$$
and there exists a real number $\theta$ with $0\le \theta_n\le \theta<1$ for all $n\ge 0$. Then the followings hold:\\[.1in]
{\rm (i)} $\sum_{n=1}^{+\infty}[\Phi_n-\Phi_{n-1}]_{+}<+\infty$, where $[t]_{+}:=\max\{t,0\}$;\\[.1in]
{\rm (ii)} There exists $\Phi_*\in [0,+\infty)$ such that $\lim_{n\to +\infty}\Phi_n=\Phi_*.$
\end{lemma}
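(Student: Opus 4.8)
The plan is to collapse the coupled second-order inequality into a scalar first-order contraction-type recursion by passing to positive parts. I would introduce the forward differences $d_n := \Phi_n - \Phi_{n-1}$ for $n \ge 1$, so that the hypothesis becomes $d_{n+1} \le \theta_n d_n + \Delta_n$. Since $\theta_n \ge 0$, $\Delta_n \ge 0$ and $d_n \le [d_n]_+$, the right-hand side is nonnegative, and I would first observe $[d_{n+1}]_+ \le \theta_n [d_n]_+ + \Delta_n \le \theta [d_n]_+ + \Delta_n$, where the last step uses the uniform bound $\theta_n \le \theta < 1$. Writing $a_n := [d_n]_+$, this is the clean recursion $a_{n+1} \le \theta a_n + \Delta_n$, and the point of the \emph{strict} inequality $\theta < 1$ is precisely to keep the resulting geometric accumulation under control.

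For part (i) I would sum this recursion. Letting $S_N := \sum_{n=1}^{N} a_n$, summing $a_{n+1} \le \theta a_n + \Delta_n$ over $n = 1,\dots,N$ and restoring the term $a_1$ gives $S_{N+1} \le a_1 + \theta S_N + \sum_{n=1}^{N}\Delta_n \le a_1 + \theta S_N + \Delta$, where $\Delta := \sum_{n=1}^{\infty}\Delta_n < +\infty$ by hypothesis. Because $S_N$ is nondecreasing we have $S_N \le S_{N+1}$, and substituting yields $(1-\theta)S_N \le a_1 + \Delta$. As $1-\theta > 0$, the partial sums $S_N$ are bounded uniformly in $N$; being increasing, they converge, which is exactly $\sum_{n=1}^{\infty}[\Phi_n - \Phi_{n-1}]_+ < +\infty$.

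For part (ii) I would decompose each increment into positive and negative parts, $d_{n+1} = [d_{n+1}]_+ - [d_{n+1}]_-$ with $[t]_- := \max\{-t,0\} \ge 0$, and telescope to obtain $\Phi_{n+1} = \Phi_1 + P_n - Q_n$, where $P_n := \sum_{k=1}^{n}[d_{k+1}]_+$ and $Q_n := \sum_{k=1}^{n}[d_{k+1}]_-$. By (i) the increasing sequence $P_n$ converges to a finite limit $P$, while $Q_n$ is also increasing in $n$. Using $\Phi_{n+1} \ge 0$ I would bound $Q_n = \Phi_1 + P_n - \Phi_{n+1} \le \Phi_1 + P$, so $Q_n$ is bounded, hence converges to some finite $Q$. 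Therefore $\Phi_{n+1} \to \Phi_1 + P - Q =: \Phi_*$, and $\Phi_* \ge 0$ as a limit of nonnegative terms, giving $\Phi_* \in [0,+\infty)$.

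I would not expect a serious obstacle: the only genuine idea is the passage to positive parts, which decouples the inertial increment $\theta_n(\Phi_n - \Phi_{n-1})$ from the rest, after which everything reduces to monotone-sequence bookkeeping. The one point to treat with care is the strictness $\theta < 1$, since this is what prevents the geometric buildup of past increments from driving the partial sums in (i) to infinity, and it is therefore indispensable to the argument.
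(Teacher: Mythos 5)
Your proof is correct: the passage to positive parts $[d_n]_+$ turns the inertial recursion into $a_{n+1}\le\theta a_n+\Delta_n$, and both the summation argument for (i) and the telescoping decomposition $\Phi_{n+1}=\Phi_1+P_n-Q_n$ for (ii) are sound. The paper states this lemma without proof, citing Alvarez and Attouch, and your argument is essentially the standard one given in that reference, so there is nothing further to compare.
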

Finally, in any Hilbert space, we have the following result, see, e.g., in \cite[Corollary 2.14]{BC2011}.
\begin{lemma}\label{lem3} For all $x,~y\in H$ and $\alpha\in \Re$, the following equality always holds
\begin{equation*}
\|\alpha x+(1-\alpha)y\|^2=\alpha\|x\|^2+(1-\alpha)\|y\|^2-\alpha(1-\alpha)\|x-y\|^2.
\end{equation*}
\end{lemma}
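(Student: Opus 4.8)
The plan is to prove this identity by direct expansion, relying only on the Hilbert-space inner-product structure: the bilinearity and symmetry of $\langle\cdot,\cdot\rangle$ together with the defining relation $\|u\|^2=\langle u,u\rangle$. Since both sides are polynomials in $\alpha$ with coefficients built from $\|x\|^2$, $\|y\|^2$, and $\langle x,y\rangle$, it suffices to expand each side and match the corresponding coefficients. No topological or analytic input beyond the inner product is needed, so this is an algebraic verification rather than a genuine analytic argument.

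First I would expand the left-hand side. Writing the squared norm as an inner product and using bilinearity gives
\begin{equation*}
\|\alpha x+(1-\alpha)y\|^2=\alpha^2\|x\|^2+2\alpha(1-\alpha)\langle x,y\rangle+(1-\alpha)^2\|y\|^2.
\end{equation*}
Next I would expand the right-hand side, starting from $\|x-y\|^2=\|x\|^2-2\langle x,y\rangle+\|y\|^2$, so that
\begin{equation*}
\alpha\|x\|^2+(1-\alpha)\|y\|^2-\alpha(1-\alpha)\bigl(\|x\|^2-2\langle x,y\rangle+\|y\|^2\bigr).
\end{equation*}

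Finally I would collect terms and compare. The coefficient of $\|x\|^2$ on the right is $\alpha-\alpha(1-\alpha)=\alpha^2$; the coefficient of $\|y\|^2$ is $(1-\alpha)-\alpha(1-\alpha)=(1-\alpha)^2$; and the coefficient of $\langle x,y\rangle$ is $2\alpha(1-\alpha)$. These match the three coefficients obtained from the left-hand side, which establishes the equality. The only point requiring any care is this coefficient bookkeeping — in particular verifying that the $-\alpha(1-\alpha)$ terms combine with $\alpha$ and $(1-\alpha)$ to produce exactly $\alpha^2$ and $(1-\alpha)^2$ — but there is no substantive obstacle, as the result is a standard convexity identity valid for every real $\alpha$.
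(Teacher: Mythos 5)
Your proof is correct: the expansion of both sides via bilinearity of the inner product and the coefficient matching ($\alpha-\alpha(1-\alpha)=\alpha^2$, $(1-\alpha)-\alpha(1-\alpha)=(1-\alpha)^2$, and $2\alpha(1-\alpha)$ for $\langle x,y\rangle$) is exactly the standard verification of this identity. The paper itself offers no proof, simply citing Corollary 2.14 of Bauschke--Combettes, so your argument supplies the routine computation the reference contains and there is nothing further to compare.
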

\section{Inertial regularized algorithm}\label{main}
This section introduces a new algorithm for solving problem (EP) involving strongly pseudomonotone and Lipschitz-type bifunctions. The algorithm can be 
considered as a combination of the proximal-like regularized technique and inertial effect. The following is the algorithm in details.\\
\noindent\rule{12.1cm}{0.4pt} 
\begin{algorithm}[Inertial Regularized Algorithm - IRA].\label{alg1}\\
\noindent\rule{12.1cm}{0.4pt}\\
\textbf{Initialization:} Choose $x_0,~x_1\in C$ and two sequences $\left\{\lambda_n\right\}\subset (0,+\infty)$ and $\left\{\theta_n\right\}\subset [0,1]$.\\
\noindent\rule{12.1cm}{0.4pt}\\
\textbf{Iterative Steps:} Assume that $x_{n-1},~x_n\in C$ are known, calculate $x_{n+1}$ as follows:

\textbf{Step 1.} Set $w_n=x_n+\theta_n(x_n-x_{n-1})$ and compute for each $n\ge 1$,
$$x_{n+1}= {\rm prox}_{\lambda_n f(w_n,.)}(w_n).$$

\textbf{Step 2.} If $x_{n+1}=w_n$ then stop and $x_{n+1}$ is the solution of problem (EP). 

Otherwise, set $n:=n+1$ and go back \textbf{Step 1}.
\end{algorithm}
\noindent\rule{12.1cm}{0.4pt}\\
\begin{remark}\label{rem3}
The main task of Algorithm \ref{alg1} is to compute the proximal mapping in Step 1. This can be equivalently rewritten as
$$x_{n+1}= \arg\min\left\{\lambda_n f(w_n,y)+\frac{1}{2}||w_n-y||^2:y\in C\right\}.$$
Under hypotheses (A1) and (A4), from Lemma \ref{prox} and Remark \ref{rem2}, it is easy to see that if Algorithm \ref{alg1} terminates at some iterate 
$n$, i.e., $x_{n+1}=w_n$ then $x_{n+1}$ is the solution of problem (EP). Throughout the paper, we assume that Algorithm \ref{alg1} does not stop. This 
means that the sequence $\left\{x_n\right\}$ generated by Algorithm \ref{alg1} is infinite. When $\theta_n=0$, Algorithm \ref{alg1} can give us a regularized 
algorithm with a proximal-like step. As in \cite{AA2001,A2004,BCL2016,MM2008,Mo2003}, when $\theta_n\ne 0$, the extrapolation term 
$\theta_n(x_n-x_{n-1})$ is called the inertial effect and intended to speed up the convergence properties. This is also illustrated in our numerical experiments 
in the final part of this paper.
\end{remark}
Also, under hypotheses (A2) and (A4), problem (EP) has an unique solution. This unique solution will be denoted by $x^*$ in what follows. The following lemma 
will be used repeatedly in the next two sections.
\begin{lemma}\label{lem1}
Suppose that assumptions {\rm (A1) - (A4)} hold. Then, the sequence $\left\{x_n\right\}$ generated by Algorithm \ref{alg1} satisfies the following estimate,
\begin{eqnarray*}
(1+\lambda_n(2\gamma-L\sqrt{\lambda_n}))||x_{n+1}-x^*||^2&\le&(1+\theta_n)||x_n-x^*||^2-\theta_n||x_{n-1}-x^*||^2\nonumber\\
&&-M_n||x_{n+1}-x_n||^2+N_n||x_n-x_{n-1}||^2,
\end{eqnarray*}
where 
$$
M_n=(1-\theta_n)(1-L\sqrt{\lambda_n}),~N_n= \theta_n\left[1+\theta_n+(1-\theta_n)(1-L\sqrt{\lambda_n})\right].
$$
\end{lemma}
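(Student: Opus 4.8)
The plan is to start from the variational characterization of the single proximal step, then inject in turn the Lipschitz-type condition, strong pseudomonotonicity, and a weighted Young inequality, before finally unfolding the inertial point $w_n$ with the identity of Lemma~\ref{lem3}. First I would apply Lemma~\ref{prox} to the function $g=f(w_n,\cdot)$, which is proper, convex and lower semicontinuous by (A4), at the point $w_n$ with parameter $\lambda_n$ and test point $y=x^*\in C$. Since $x_{n+1}={\rm prox}_{\lambda_n f(w_n,\cdot)}(w_n)$, this yields
$$\lambda_n\left\{f(w_n,x^*)-f(w_n,x_{n+1})\right\}\ge \left\langle w_n-x_{n+1},x^*-x_{n+1}\right\rangle.$$
Rewriting the inner product through the polarization identity $2\left\langle w_n-x_{n+1},x^*-x_{n+1}\right\rangle=||w_n-x_{n+1}||^2+||x_{n+1}-x^*||^2-||w_n-x^*||^2$ and rearranging gives
$$||x_{n+1}-x^*||^2\le ||w_n-x^*||^2-||w_n-x_{n+1}||^2+2\lambda_n\left\{f(w_n,x^*)-f(w_n,x_{n+1})\right\}.$$

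The next step is to control the bifunction values. The inertial point $w_n=(1+\theta_n)x_n-\theta_n x_{n-1}$ need not lie in $C$, so strong pseudomonotonicity cannot be invoked on a pair containing $w_n$; instead I would use the Lipschitz-type condition (A3), which holds on all of $H$, with the triple $(w_n,x_{n+1},x^*)$ to get
$$f(w_n,x^*)-f(w_n,x_{n+1})\le f(x_{n+1},x^*)+L\,||w_n-x_{n+1}||\,||x_{n+1}-x^*||.$$
Strong pseudomonotonicity (A2) then enters through the pair $(x^*,x_{n+1})$, both of which lie in $C$: since $x^*$ solves (EP) we have $f(x^*,x_{n+1})\ge 0$, whence $f(x_{n+1},x^*)\le -\gamma||x_{n+1}-x^*||^2$. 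Substituting these two facts into the previous estimate and bounding the cross term by the weighted Young inequality $2\lambda_n L\,ab\le L\sqrt{\lambda_n}\,a^2+L\lambda_n\sqrt{\lambda_n}\,b^2$ with $a=||w_n-x_{n+1}||$ and $b=||x_{n+1}-x^*||$ (the parameter $\sqrt{\lambda_n}$ is precisely what manufactures the $L\sqrt{\lambda_n}$ coefficients), I would move all $||x_{n+1}-x^*||^2$ terms to the left to obtain
$$\left(1+\lambda_n(2\gamma-L\sqrt{\lambda_n})\right)||x_{n+1}-x^*||^2\le ||w_n-x^*||^2-(1-L\sqrt{\lambda_n})||w_n-x_{n+1}||^2.$$

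It then remains to translate the right-hand side into the $x_n,x_{n-1}$ variables. Applying Lemma~\ref{lem3} with $\alpha=1+\theta_n$ to the affine combination $w_n-x^*=(1+\theta_n)(x_n-x^*)-\theta_n(x_{n-1}-x^*)$ gives
$$||w_n-x^*||^2=(1+\theta_n)||x_n-x^*||^2-\theta_n||x_{n-1}-x^*||^2+\theta_n(1+\theta_n)||x_n-x_{n-1}||^2.$$
For the remaining term, working in the regime $1-L\sqrt{\lambda_n}\ge 0$ (so the inequality direction is preserved), I would use the lower bound
$$||w_n-x_{n+1}||^2\ge (1-\theta_n)||x_{n+1}-x_n||^2-\theta_n(1-\theta_n)||x_n-x_{n-1}||^2,$$
which follows from writing $w_n-x_{n+1}=(x_n-x_{n+1})+\theta_n(x_n-x_{n-1})$ and noting that the difference of the two sides equals $\theta_n||2x_n-x_{n+1}-x_{n-1}||^2\ge 0$. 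Feeding both displays into the preceding estimate and collecting coefficients produces exactly $-M_n||x_{n+1}-x_n||^2$ together with $N_n||x_n-x_{n-1}||^2$, since $\theta_n(1+\theta_n)+\theta_n M_n=N_n$.

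I expect the main difficulty to be bookkeeping rather than conceptual: landing on the \emph{exact} constants $M_n$ and $N_n$ forces three choices to be made in concert, namely bridging $w_n$ (possibly outside $C$) to the in-$C$ pair $(x^*,x_{n+1})$ via (LC) rather than (A2), selecting the Young parameter to be precisely $\sqrt{\lambda_n}$, and using the sharp lower bound on $||w_n-x_{n+1}||^2$ whose slack is the nonnegative square $\theta_n||2x_n-x_{n+1}-x_{n-1}||^2$. A secondary point worth flagging is the sign of $1-L\sqrt{\lambda_n}$, which must be nonnegative for the last inequality to go through; this is consistent with the diminishing stepsize rule used later in the convergence analysis.
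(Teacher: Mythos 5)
Your proof is correct and follows essentially the same route as the paper's: the identical chain of (prox characterization of $x_{n+1}$) $\to$ (Lipschitz-type condition on the triple $(w_n,x_{n+1},x^*)$) $\to$ (strong pseudomonotonicity at the in-$C$ pair $(x^*,x_{n+1})$) $\to$ (Young's inequality with parameter $\sqrt{\lambda_n}$) $\to$ (Lemma~\ref{lem3} for $\|w_n-x^*\|^2$) $\to$ (lower bound on $\|w_n-x_{n+1}\|^2$), the only cosmetic differences being that you obtain the first energy estimate from Lemma~\ref{prox} plus the polarization identity where the paper invokes the optimality condition and the strong convexity of $\lambda_n f(w_n,\cdot)+\frac{1}{2}\|\cdot-w_n\|^2$, and that you derive the bound on $\|w_n-x_{n+1}\|^2$ as an exact identity with slack $\theta_n\|x_{n+1}-2x_n+x_{n-1}\|^2$ where the paper uses Cauchy--Schwarz and AM--GM. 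Your caveat that the final combination needs $1-L\sqrt{\lambda_n}\ge 0$ is accurate and is left implicit in the paper (harmlessly, since every later application of the lemma operates in that regime).
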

\begin{proof}
From the definitions of the proximal mapping and of $x_{n+1}$, we can write 
\begin{equation}\label{eq:1}
x_{n+1}={\rm prox}_{\lambda_n f(w_n,.)}(w_n)=\arg\min \left\{f_n(x):x\in C\right\},
\end{equation}
where $f_n(x)=\lambda_n f(w_n,x)+\frac{1}{2}||x-w_n||^2$. From relation (\ref{eq:1}) and using the optimality condition, we obtain 
$0\in \partial f_n(x_{n+1})+N_C(x_{n+1})$. Thus, there exists $g^*_n\in \partial f_n(x_{n+1})$ such that $-g_n^*\in N_C(x_{n+1})$, i.e., 
\begin{equation}\label{eq:2}
\left\langle g^*_n,x-x_{n+1}\right\rangle\ge 0,~\forall x\in C.
\end{equation}
Since $f(w_n,.)$ is convex, $f_n(x)$ is strongly convex with the modulus $1$. This implies that 
\begin{equation}\label{eq:3}
f_n(x_{n+1})+\left\langle g_n,x-x_{n+1}\right\rangle+\frac{1}{2}||x-x_{n+1}||^2\le f_n(x),~\forall x\in C,
\end{equation}
for any $g_n\in \partial f_n(x_{n+1})$. Substituting $g_n=g_n^*$ and $x=x^*$ into relation (\ref{eq:3}) and using relation (\ref{eq:2}), we get 
$$
f_n(x_{n+1})+\frac{1}{2}||x^*-x_{n+1}||^2\le f_n(x^*),
$$
which together with the definition of $f_n$ implies that 
\begin{equation}\label{eq:4}
||x_{n+1}-x^*||^2\le 2\lambda_n \left\{f(w_n,x^*)-f(w_n,x_{n+1})\right\}+||w_n-x^*||^2-||x_{n+1}-w_n||^2.
\end{equation}
Using the Lipschitz-type condition of $f$ and the Cauchy inequality, we obtain that
\begin{eqnarray}\label{eq:5}
&&f(w_n,x^*)-f(w_n,x_{n+1})\le f(x_{n+1},x^*)+L||x_{n+1}-w_n||||x_{n+1}-x^*||\nonumber\\
&\le&f(x_{n+1},x^*)+\frac{L}{2}\left(\frac{1}{\sqrt{\lambda_n}}||x_{n+1}-w_n||^2+\sqrt{\lambda_n}||x_{n+1}-x^*||^2\right).
\end{eqnarray}
Since $x^*$ is the solution of problem (EP), $f(x^*,x_{n+1})\ge 0$. Thus, from the strong pseudomotonicity of $f$, we obtain that 
$f(x_{n+1},x^*)\le -\gamma ||x_{n+1}-x^*||^2$. This together with relation (\ref{eq:5}) implies that 
$$
f(w_n,x^*)-f(w_n,x_{n+1})\le -(\gamma-\frac{L\sqrt{\lambda_n}}{2})||x_{n+1}-x^*||^2+\frac{L}{2\sqrt{\lambda_n}}||x_{n+1}-w_n||^2.
$$
Multiplying both two sides of the last inequality by $2\lambda_n$, we obtain 
\begin{eqnarray}
2\lambda_n\left(f(w_n,x^*)-f(w_n,x_{n+1})\right)&\le& -\lambda_n(2\gamma-L\sqrt{\lambda_n})||x_{n+1}-x^*||^2\nonumber\\
&&+L\sqrt{\lambda_n}||x_{n+1}-w_n||^2.\label{eq:6}
\end{eqnarray}
It follows from relations (\ref{eq:4}) and (\ref{eq:6}) that 
\begin{equation}\label{eq:7}
(1+\lambda_n(2\gamma-L\sqrt{\lambda_n}))||x_{n+1}-x^*||^2\le ||w_n-x^*||^2-(1-L\sqrt{\lambda_n})||x_{n+1}-w_n||^2.
\end{equation}
From the definition of $w_n$ and Lemma \ref{lem3} we have 
\begin{eqnarray}
||w_n-x^*||^2&=&||(1+\theta_n)(x_n-x^*)-\theta_n(x_{n-1}-x^*)||^2\nonumber\\
&=&(1+\theta_n)||x_n-x^*||^2-\theta_n||x_{n-1}-x^*||^2\nonumber\\
&&+\theta_n(1+\theta_n)||x_{n}-x_{n-1}||^2.\label{eq:8}
\end{eqnarray}
It also follows from the definition of $w_n$ that 
\begin{eqnarray}
&&||x_{n+1}-w_n||^2=||x_{n+1}-x_n-\theta_n(x_n-x_{n-1})||^2\nonumber\\ 
& =&||x_{n+1}-x_n||^2+\theta^2_n||x_n-x_{n-1}||^2-2\theta_n \left\langle x_{n+1}-x_n,x_n-x_{n-1} \right\rangle\nonumber\\
&\ge& ||x_{n+1}-x_n||^2+\theta^2_n||x_n-x_{n-1}||^2-2\theta_n ||x_{n+1}-x_n|| ||x_n-x_{n-1}||\nonumber\\
&\ge& ||x_{n+1}-x_n||^2+\theta^2_n||x_n-x_{n-1}||^2-\theta_n \left[||x_{n+1}-x_n||^2+ ||x_n-x_{n-1}||^2\right]\nonumber\\
& =&(1-\theta_n)||x_{n+1}-x_n||^2-\theta_n(1-\theta_n)||x_n-x_{n-1}||^2.\label{eq:9}
\end{eqnarray}
Combining relations (\ref{eq:7}), (\ref{eq:8}) and (\ref{eq:9}), we get 
\begin{eqnarray*}
&&(1+\lambda_n(2\gamma-L\sqrt{\lambda_n}))||x_{n+1}-x^*||^2\le(1+\theta_n)||x_n-x^*||^2-\theta_n||x_{n-1}-x^*||^2\nonumber\\
&&-(1-\theta_n)(1-L\sqrt{\lambda_n})||x_{n+1}-x_n||^2\nonumber\\
&&+\theta_n\left[1+\theta_n+(1-\theta_n)(1-L\sqrt{\lambda_n})\right]||x_n-x_{n-1}||^2,
\end{eqnarray*}
which together with the definitions of $M_n$, $N_n$ implies the desired conclusion. Lemma \ref{lem1} is proved.
\end{proof}
\begin{remark}\label{rem4}
In the case, when $f$ satisfies the condition (MLC) of Mastroeni in \cite{M2000} with two constants $c_1$ and $c_2$ then we have the following 
estimate
\begin{eqnarray}
(1+2\lambda_n(\gamma-c_2))||x_{n+1}-x^*||^2&\le&(1+\theta_n)||x_n-x^*||^2-\theta_n||x_{n-1}-x^*||^2\nonumber\\
&&-\bar{M}_n||x_{n+1}-x_n||^2+\bar{N}_n||x_n-x_{n-1}||^2,\label{NE}
\end{eqnarray}
where 
$
\bar{M}_n=(1-\theta_n)(1-2\lambda_n c_1),~\bar{N}_n= \theta_n\left[1+\theta_n+(1-\theta_n)(1-2\lambda_n c_1)\right].
$
Indeed, from relation (\ref{eq:4}) and the condition (MLC) of $f$ that
$$
f(w_n,x^*)-f(w_n,x_{n+1})\le f(x_{n+1},x^*)+c_1||x_{n+1}-w_n||^2+c_2||x_{n+1}-x^*||^2,
$$
we obtain 
\begin{eqnarray*}
||x_{n+1}-x^*||^2&\le&2\lambda_n\left[f(x_{n+1},x^*)+c_1||x_{n+1}-w_n||^2+c_2||x_{n+1}-x^*||^2\right]\\
&&+||w_n-x^*||^2-||x_{n+1}-w_n||^2.
\end{eqnarray*}
This together with the fact $f(x_{n+1},x^*)\le -\gamma ||x_{n+1}-x^*||^2$ implies that 
\begin{eqnarray*}
||x_{n+1}-x^*||^2&\le&2\lambda_n\left[-\gamma ||x_{n+1}-x^*||^2+c_1||x_{n+1}-w_n||^2+c_2||x_{n+1}-x^*||^2\right]\\
&&+||w_n-x^*||^2-||x_{n+1}-w_n||^2.
\end{eqnarray*}
Thus
\begin{equation}\label{eq:9a}
(1+2\lambda_n(\gamma-c_2))||x_{n+1}-x^*||^2\le ||w_n-x^*||^2-(1-2\lambda_n c_1)||x_{n+1}-w_n||^2.
\end{equation}
It follows from relations (\ref{eq:8}), (\ref{eq:9}) and (\ref{eq:9a}) that 
\begin{eqnarray*}
(1+2\lambda_n(\gamma-c_2))||x_{n+1}-x^*||^2&\le&(1+\theta_n)||w_n-x^*||^2-\theta_n||x_{n+1}-w_n||^2\\
&&-(1-\theta_n)(1-2 \lambda_n c_1)||x_{n+1}-x_n||^2\\
&&+\theta_n\left[1+\theta_n+(1-\theta_n)(1-2 \lambda_n c_1)\right]||x_n-x_{n-1}||^2,
\end{eqnarray*}
which, from the definitons of $\bar{M}_n$ and $\bar{N}_n$, is equivalent to relation (\ref{NE}).
\end{remark}
\section{Inertial regularized algorithm without prior constants}\label{IRM1}
In this section, we consider Algorithm \ref{alg1} for solving problem (EP) for a bifunction $f$ which is strongly pseudomonotone with some modulus 
$\gamma$ (hypothesis (A2)) and satisfies the Lipschitz-type condition (LC) with some constant $L$ (hypothesis (A3)). However, as in \cite{H2017AA,H2017NUMA}, 
we will establish that Algorithm \ref{alg1} can be done without the prior knowledge of the constants $\gamma$ and $L$. This is particularly interesting when those constants 
are unknown or difficult to approximate. In order to get that result, in Algorithm \ref{alg1} we consider the sequence of stepsizes 
$\left\{\lambda_n\right\}\subset (0,+\infty)$ and the sequence of inertial parameters $\left\{\theta_n\right\}\subset [0,1]$ satisfying the following conditions: \\[.1in]
(H1): $\lim_{n\to\infty}\lambda_n=0,\qquad {\rm (H2)}:\sum_{n=0}^\infty \lambda_n=+\infty. $\\[.1in]
(H3): $\left\{\theta_n\right\}$ is non-decreasing and $\theta_n\in [0,\theta_*]$ for some $\theta_* \in [0,\frac{1}{3})$.\\[.1in]
A simple example of sequence $\left\{\lambda_n\right\}$ satisfies conditions (H1) and (H2) as $\lambda_n=\frac{1}{(n+1)^p}$ for each $n\ge 0$, where $p\in (0,1]$. We 
have the following first main result.
\begin{theorem}\label{theo1}
Under hypotheses {\rm (A1) - (A4)} and {\rm (H1) - (H3)}, then the sequence $\left\{x_n\right\}$ generated by Algorithm \ref{alg1} converges strongly to the unique 
solution $x^*$ of problem (EP).
\end{theorem}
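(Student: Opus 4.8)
The plan is to feed $\Phi_n := \|x_n-x^*\|^2$ into the estimate of Lemma \ref{lem1} and build a discrete Lyapunov functional whose monotonicity is forced by the constraint $\theta_*<\tfrac13$. Since $\lambda_n\to 0$ by (H1), for all large $n$ we have $L\sqrt{\lambda_n}<2\gamma$, so the left coefficient $1+\lambda_n(2\gamma-L\sqrt{\lambda_n})\ge 1$; dropping it and writing $a_n:=\|x_{n+1}-x_n\|^2$ yields the working inequality $\Phi_{n+1}\le \Phi_n+\theta_n(\Phi_n-\Phi_{n-1})-M_n a_n+N_n a_{n-1}$. Using that $\{\theta_n\}$ is non-decreasing, so $\theta_n-\theta_{n+1}\le 0$ and $(1+\theta_n-\theta_{n+1})\Phi_n\le\Phi_n$, this rearranges into $\mu_{n+1}\le\mu_n-M_n a_n+N_n a_{n-1}$ with $\mu_n:=\Phi_n-\theta_n\Phi_{n-1}$, and then, setting $\nu_n:=\mu_n+N_n a_{n-1}$, into $\nu_{n+1}\le \nu_n-(M_n-N_{n+1})a_n$.

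The crux — the step I expect to be the main obstacle — is showing $M_n-N_{n+1}$ is eventually bounded below by a positive constant. Letting $n\to\infty$ (with $\lambda_n\to0$ and $\theta_n$ increasing to a limit $\theta_\infty\le\theta_*$) gives $M_n\to 1-\theta_\infty$ and $N_{n+1}\to 2\theta_\infty$, hence $M_n-N_{n+1}\to 1-3\theta_\infty\ge 1-3\theta_*>0$, which is exactly where $\theta_*<\tfrac13$ enters. Thus there exist $n_0$ and $\sigma>0$ with $M_n-N_{n+1}\ge\sigma$ for $n\ge n_0$, so $\{\nu_n\}$ is non-increasing on $[n_0,\infty)$. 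From $\nu_n\le\nu_{n_0}$ I get $\mu_n\le\nu_{n_0}$, i.e. $\Phi_n\le\theta_*\Phi_{n-1}+C$; iterating this geometric recursion (ratio $\theta_*<1$) bounds $\{\Phi_n\}$, whence $\{x_n\}$ is bounded and $\nu_n$ is bounded below. Monotone convergence of $\nu_n$ then gives $\sigma\sum a_n\le\nu_{n_0}-\lim\nu_n<+\infty$, so $\sum\|x_{n+1}-x_n\|^2<+\infty$ and in particular $\|x_{n+1}-x_n\|\to 0$.

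With $\sum a_n<+\infty$ secured, I would invoke Lemma \ref{lem.AA2001} applied to a tail: since $-M_n a_n+N_n a_{n-1}\le N_n a_{n-1}\le(\sup_n N_n)\,a_{n-1}=:\Delta_{n-1}'$ with $\sum\Delta_n'<+\infty$, the working inequality has the required form with $0\le\theta_n\le\theta_*<1$, giving both $\sum[\Phi_n-\Phi_{n-1}]_+<+\infty$ and the existence of $\Phi_*:=\lim_n\Phi_n$. It remains to prove $\Phi_*=0$, which is where (H2) is used. Rearranging Lemma \ref{lem1} to isolate the stepsize term, $\lambda_n(2\gamma-L\sqrt{\lambda_n})\Phi_{n+1}\le\theta_n(\Phi_n-\Phi_{n-1})+(\Phi_n-\Phi_{n+1})+(N_n a_{n-1}-M_n a_n)$, and summing over $n$, the right side is summable: the first term by $\theta_*\sum[\Phi_n-\Phi_{n-1}]_+<+\infty$, the second telescopes, and the third is controlled by $\sum N_n a_{n-1}<+\infty$. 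Hence $\sum_n\lambda_n(2\gamma-L\sqrt{\lambda_n})\Phi_{n+1}<+\infty$, and since $2\gamma-L\sqrt{\lambda_n}\ge\gamma>0$ eventually, $\sum_n\lambda_n\Phi_{n+1}<+\infty$. As $\sum_n\lambda_n=+\infty$ by (H2), this forces $\liminf_n\Phi_n=0$; combined with the existence of $\lim_n\Phi_n$ we conclude $\Phi_*=0$, i.e. $x_n\to x^*$ strongly. Once the positivity gap $M_n-N_{n+1}\ge\sigma$ is in place, the boundedness, the square-summability of the increments, and the final $\liminf$ argument all follow routinely.
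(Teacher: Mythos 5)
Your proof is correct and follows essentially the same route as the paper: your $\nu_n$ is exactly the paper's Lyapunov sequence $\varphi_n=\|x_n-x^*\|^2-\theta_n\|x_{n-1}-x^*\|^2+N_n\|x_n-x_{n-1}\|^2$, the positivity gap $M_n-N_{n+1}\ge\sigma>0$ (which the paper gets by an explicit choice of $\sigma\in(\tfrac{2\theta_*}{1-\theta_*},1)$ and you get by a limit argument, both valid) plays the same role, and the endgame via $\sum_n\lambda_n\|x_{n+1}-x^*\|^2<+\infty$ together with (H2) and Lemma \ref{lem.AA2001} is identical. The only cosmetic difference is that in the last step you bound $\sum\theta_n(\Phi_n-\Phi_{n-1})$ using part (i) of Lemma \ref{lem.AA2001}, whereas the paper re-expands $\|w_n-x^*\|^2$ and telescopes using the monotonicity of $\{\theta_n\}$.
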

\begin{proof}
Since $0\le \theta^*<\frac{1}{3}$, we obtain $0\le \frac{2\theta^*}{1-\theta^*}<1$. Now let $\sigma$ be fixed in the interval $(\frac{2\theta_*}{1-\theta_*},1)$. Since $\lambda_n\to 0$, there exists $n_0\ge 0$ such that for all $n\ge n_0$,
\begin{equation}\label{eq:15a}
1-L\sqrt{\lambda_n} \ge \sigma~\mbox{and}~2\gamma-L\sqrt{\lambda_n}\ge \gamma>0.
\end{equation}
It follows from Lemma \ref{lem1} and relation (\ref{eq:15a}) that, for all $n\ge n_0,$ 
\begin{eqnarray}
||x_{n+1}-x^*||^2&\le&(1+\lambda_n(2\gamma-L\sqrt{\lambda_n}))||x_{n+1}-x^*||^2\nonumber\\
&\le& (1+\theta_n)||x_n-x^*||^2-\theta_n||x_{n-1}-x^*||^2\nonumber\\
&&-M_n||x_{n+1}-x_n||^2+N_n||x_n-x_{n-1}||^2.\label{eq:15}
\end{eqnarray}
where $M_n$ and $N_n$ are recalled that
\begin{equation}\label{eq:16}
M_n=(1-\theta_n)(1-L\sqrt{\lambda_n}),~N_n= \theta_n\left[1+\theta_n+(1-\theta_n)(1-L\sqrt{\lambda_n})\right].
\end{equation}
Let $\varphi_n=||x_n-x^*||^2-\theta_n||x_{n-1}-x^*||^2+N_n||x_n-x_{n-1}||^2$. Thus, from the non-decreasing property of $\left\{\theta_n\right\}$ and 
relation (\ref{eq:15}), we obtain that 
\begin{eqnarray}
\varphi_{n+1}-\varphi_n&=&||x_{n+1}-x^*||^2-\theta_{n+1}||x_{n}-x^*||^2+N_{n+1}||x_{n+1}-x_{n}||^2\nonumber\\ 
&&-||x_n-x^*||^2+\theta_n||x_{n-1}-x^*||^2-N_n||x_n-x_{n-1}||^2\nonumber\\ 
&\le&||x_{n+1}-x^*||^2-(1+\theta_{n})||x_{n}-x^*||^2+N_{n+1}||x_{n+1}-x_{n}||^2\nonumber\\ 
&&+\theta_n||x_{n-1}-x^*||^2-N_n||x_n-x_{n-1}||^2\nonumber\\ 
&\le&-M_n||x_{n+1}-x_n||^2+N_{n+1}||x_{n+1}-x_{n}||^2\nonumber\\ 
&=&-(M_n-N_{n+1})||x_{n+1}-x_n||^2.\label{eq:17}
\end{eqnarray}
Moreover, from the definitions of $M_n,~N_{n+1}$, relation (\ref{eq:15a}), and the facts $\sigma\in (\frac{2\theta_*}{1-\theta_*},1)$ and 
$1-L\sqrt{\lambda_{n+1}}<1$ , we have for all $n\ge n_0$ that
\begin{eqnarray*}
M_n-N_{n+1}&=&(1-\theta_n)(1-L\sqrt{\lambda_n})- \theta_{n+1}\left[1+\theta_{n+1}+(1-\theta_{n+1})(1-L\sqrt{\lambda_{n+1}})\right]\nonumber\\ 
&\ge&(1-\theta_n)\sigma-\theta_{n+1}\left[1+\theta_{n+1}+(1-\theta_{n+1})\right]\nonumber\\ 
&\ge&(1-\theta_{n+1})\sigma- 2\theta_{n+1}~\qquad \qquad (\mbox{since}~\theta_n\le \theta_{n+1})\nonumber\\
&\ge&(1-\theta_{*})\sigma- 2\theta_{*}:=K>0.
\end{eqnarray*}
This together with relation (\ref{eq:17}) implies that 
\begin{equation}\label{eq:19}
\varphi_{n+1}-\varphi_n\le -K||x_{n+1}-x_n||^2,~\forall n\ge n_0.
\end{equation}
Thus, $\left\{\varphi_n\right\}_{n=n_0}^{+\infty}$ is non-increasing. It follows from the definition of $\varphi_n$ that 
$\varphi_n\ge ||x_n-x^*||^2-\theta_n||x_{n-1}-x^*||^2$, and thus, we obtain for all $n\ge n_0$ that
\begin{eqnarray*}
||x_n-x^*||^2\le\varphi_n+\theta_n||x_{n-1}-x^*||^2\le \varphi_{n_0}+\theta_*||x_{n-1}-x^*||^2.
\end{eqnarray*}
Hence, we get by the induction that 
\begin{eqnarray*}
||x_n-x^*||^2\le \varphi_{n_0}(1+\theta_*+\ldots+\theta_*^{n-n_0})+\theta_*^{n-n_0}||x_{n_0}-x^*||^2,~\forall n\ge n_0,
\end{eqnarray*}
which implies that 
\begin{equation}\label{eq:20}
||x_n-x^*||^2\le \frac{\varphi_{n_0}}{1-\theta_*}+\theta_*^{n-n_0}||x_{n_0}-x^*||^2.
\end{equation}
It also follows from the definition of $\varphi_{n+1}$ that $\varphi_{n+1}\ge -\theta_{n+1}||x_n-x^*||^2$, and thus, from relation (\ref{eq:20}),
\begin{equation}\label{eq:21}
-\varphi_{n+1}\le \theta_{n+1}||x_n-x^*||^2\le \theta_{*}||x_n-x^*||^2\le \frac{\theta_{*}\varphi_{n_0}}{1-\theta_*}+\theta_*^{n-n_0+1}||x_{n_0}-x^*||^2.
\end{equation}
Thus, from relation (\ref{eq:19}), we obtain for all $N\ge n_0$ that 
\begin{equation}\label{eq:22}
K\sum_{n=n_0}^N ||x_{n+1}-x_n||^2\le \varphi_{n_0}-\varphi_{N+1}\le \frac{\varphi_{n_0}}{1-\theta_*}+\theta_*^{N-n_0+1}||x_{n_0}-x^*||^2.
\end{equation}
Passing to the limit in the last inequality as $N\to\infty$ and nothing that $\theta_*\in [0,\frac{1}{3})$ and $K>0$, we obtain 
\begin{equation}\label{eq:23}
\sum_{n=n_0}^\infty ||x_{n+1}-x_n||^2<+\infty,
\end{equation}
which implies that 
\begin{equation}\label{eq:24}
\lim\limits_{n\to\infty} ||x_{n+1}-x_n||^2=0.
\end{equation}
It follows from relation (\ref{eq:15}) that 
\begin{eqnarray}
||x_{n+1}-x^*||^2&\le&(1+\theta_n)||x_n-x^*||^2-\theta_n||x_{n-1}-x^*||^2\nonumber\\
&&+N_n||x_n-x_{n-1}||^2.\label{eq:25}
\end{eqnarray}
Let $\Phi_n=||x_n-x^*||^2$, $\Delta_n=N_n||x_n-x_{n-1}||^2$ and rewrite shortly inequality (\ref{eq:25}) as follows
\begin{equation}\label{eq:26}
\Phi_{n+1}\le \Phi_n+\theta_n(\Phi_n-\Phi_{n-1})+\Delta_n.
\end{equation}
Note that $\left\{N_n\right\}$ is bounded, and thus, from (\ref{eq:23}) we obtain that $\sum_{n=n_0}^\infty \Delta_n<+\infty$. This together with 
(\ref{eq:26}) and Lemma \ref{lem.AA2001} implies that $\lim\limits_{n\to\infty} \Phi_n =\Phi_*\in \Re$, i.e.,
\begin{equation}\label{eq:27}
\lim\limits_{n\to\infty} ||x_n-x^*||^2=\Phi_* \in \Re.
\end{equation}
It follows from relations (\ref{eq:7}) and (\ref{eq:15a}) that, for all $n\ge n_0$,
$$
(1+\gamma\lambda_n)||x_{n+1}-x^*||^2\le (1+\lambda_n(2\gamma-L\sqrt{\lambda_n}))||x_{n+1}-x^*||^2\le ||w_n-x^*||^2
$$
which, together with (\ref{eq:8}) and the non-decreasing property of $\left\{\theta_n\right\}$, implies that for each $n\ge n_0$,
\begin{eqnarray*}
&&\gamma \lambda_n||x_{n+1}-x^*||^2\le-||x_{n+1}-x^*||^2+||w_n-x^*||^2\\
&=&-||x_{n+1}-x^*||^2+(1+\theta_n)||x_n-x^*||^2-\theta_n||x_{n-1}-x^*||^2\\
&&+\theta_n(1+\theta_n)||x_{n}-x_{n-1}||^2\\
&=&\left[||x_n-x^*||^2-||x_{n+1}-x^*||^2\right]+\left[\theta_n||x_n-x^*||^2-\theta_n||x_{n-1}-x^*||^2\right]\\
&&+\theta_n(1+\theta_n)||x_{n}-x_{n-1}||^2\\
&\le&\left[||x_n-x^*||^2-||x_{n+1}-x^*||^2\right]+\left[\theta_n||x_n-x^*||^2-\theta_{n-1}||x_{n-1}-x^*||^2\right]\\
&&+\theta_*(1+\theta_*)||x_{n}-x_{n-1}||^2.
\end{eqnarray*}
Let $N\ge n_0$ be fixed. Using the last inequality for $n=n_0,n_0+1,\ldots,N$ and summing up these inequalities, we obtain that 
\begin{eqnarray*}
\gamma\sum_{n=n_0}^N\lambda_n||x_{n+1}-x^*||^2&\le&||x_{n_0}-x^*||^2-||x_{N+1}-x^*||^2+\theta_N||x_N-x^*||^2\\
&&-\theta_{n_0-1}||x_{n_0-1}-x^*||^2+\theta_*(1+\theta_*)\sum_{n=n_0}^N||x_{n}-x_{n-1}||^2.
\end{eqnarray*}
Passing to the limit in the last inequality as $N\to\infty$ and using relattions (\ref{eq:23}), (\ref{eq:27}) and the boundedness of $\left\{\theta_n\right\}$, we obtain that 
$$
\sum_{n=1}^\infty\lambda_n||x_{n+1}-x^*||^2<+\infty,
$$
which, together with hypothesis (H2), implies that $\lim\limits_{n\to\infty}\inf ||x_{n+1}-x^*||^2=0$. In view of relation (\ref{eq:27}), we see that the limit of 
$\left\{||x_{n+1}-x^*||^2\right\}$ exists. Thus, $\lim\limits_{n\to\infty} ||x_{n+1}-x^*||^2=0$ which completes the proof of Theorem \ref{theo1}.
\end{proof}
Now, we consider several corollaries of Theorem \ref{theo1}. By choosing $\theta_n=0$, we obtain the following corollary.
\begin{corollary}\label{cor1}
Suppose that hypotheses {\rm (A1) - (A4)} and {\rm (H1) - (H2)} hold. Let $\left\{x_n\right\}$ be a sequence generated by the following manner: choose $x_0\in C$ 
and for each $n\ge 0$, compute 
$$ x_{n+1} ={\rm prox}_{\lambda_n f(x_n,.)}(x_n).$$
Then, the sequence $\left\{x_n\right\}$ converges strongly to the unique solution $x^*$ of problem (EP).
\end{corollary}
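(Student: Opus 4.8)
The plan is to recognize Corollary \ref{cor1} as the special case of Theorem \ref{theo1} obtained by taking the inertial parameter identically zero, so that essentially nothing new must be proved. First I would observe that setting $\theta_n=0$ for every $n$ in Algorithm \ref{alg1} makes the extrapolation step trivial: since $w_n=x_n+\theta_n(x_n-x_{n-1})=x_n$, the update $x_{n+1}={\rm prox}_{\lambda_n f(w_n,\cdot)}(w_n)$ reduces precisely to $x_{n+1}={\rm prox}_{\lambda_n f(x_n,\cdot)}(x_n)$, which is exactly the recursion in the statement of the corollary. In particular, the second initial point $x_1$ required by Algorithm \ref{alg1} never enters the recursion, since it would only appear through the term $x_{n-1}$ in $w_n$, which is multiplied by $\theta_n=0$; hence a single starting point $x_0\in C$ suffices and the sequence $\{x_n\}$ is well defined, the mild re-indexing between the two descriptions being harmless.

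Second, I would check that the constant sequence $\theta_n\equiv 0$ satisfies hypothesis (H3): it is (weakly) non-decreasing, and choosing $\theta_*=0\in[0,\frac{1}{3})$ gives $\theta_n\in[0,\theta_*]$. Consequently, under (A1)--(A4) and (H1)--(H2) as assumed in the corollary, all of the hypotheses (A1)--(A4) and (H1)--(H3) of Theorem \ref{theo1} are in force. Applying Theorem \ref{theo1} then yields strong convergence of $\{x_n\}$ to the unique solution $x^*$, which is the assertion.

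I do not expect any genuine obstacle, since the result is a direct specialization; the only point deserving a line of care is the bookkeeping just mentioned, namely that dropping the inertial term removes the dependence on $x_{n-1}$ and hence on the auxiliary second initial point, after which Theorem \ref{theo1} applies verbatim. Alternatively, one could give a shorter self-contained argument by simply running the proof of Theorem \ref{theo1} with $\theta_n=0$: then $N_n\equiv 0$ and $M_n=1-L\sqrt{\lambda_n}$, so Lemma \ref{lem1} immediately shows that $\{\|x_n-x^*\|^2\}$ is eventually non-increasing and hence convergent, while the summation argument combined with hypothesis (H2) forces $\liminf_{n\to\infty}\|x_{n+1}-x^*\|^2=0$ and therefore the limit equals zero.
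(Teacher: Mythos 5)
Your proposal is correct and coincides with the paper's own (implicit) argument: the paper derives Corollary \ref{cor1} from Theorem \ref{theo1} precisely by setting $\theta_n=0$, which trivially satisfies (H3) with $\theta_*=0$ and reduces the iteration to $x_{n+1}={\rm prox}_{\lambda_n f(x_n,\cdot)}(x_n)$. Your additional bookkeeping about the second initial point and the sketched self-contained variant are both sound but not needed beyond what the paper does.
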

\begin{remark}
In the case when $\theta_n=0$, we see that $N_n=0$, $M_n\ge 0$ and $2\gamma-L\sqrt{\lambda_n}\ge \gamma$ for all $n\ge n_0$. Thus, 
it follows from relation (\ref{eq:15}) that
\begin{eqnarray*}
(1+\gamma\lambda_n)||x_{n+1}-x^*||^2&<&(1+\lambda_n(2\gamma-L\sqrt{\lambda_n}))||x_{n+1}-x^*||^2\\
&\le& ||x_n-x^*||^2,~\forall n\ge n_0.
\end{eqnarray*}
This is equivalent to $||x_{n+1}-x^*||^2<\frac{1}{1+\gamma\lambda_n}||x_n-x^*||^2$. Thus, by the induction, we obtain for each $n\ge n_0$ 
that $||x_{n+1}-x^*||^2< \frac{1}{\prod_{i=n_0}^n (1+\gamma \lambda_i)}||x_{n_0}-x^*||^2$. Hence, as in \cite{H2017AA}, we come to the following estimate, 
for each $n\ge n_0$,
\begin{equation}\label{eq:28}
||x_{n+1}-x^*||^2<\frac{||x_{n_0}-x^*||^2}{1+\gamma \sum_{i=n_0}^n\lambda_i}.
\end{equation}
\end{remark}
Next, we consider a special case when problem (EP) is a variational inequality problem (VIP). Let $A:H\to H$ be a nonlinear operator. The problem 
(VIP) for an operator $A$ on $C$ is to find $x^*\in C$ such that 
$$
\left\langle Ax^*,x-x^*\right\rangle \ge 0,~\forall x\in C.
\eqno{\rm (VIP)}
$$
Recall that an operator $A:C\to H$ is called: (i) Lipschitz continuous on $H$ if there exists a real number $L>0$ such that $||Ax-Ay||\le L||x-y||$ for all $x,y\in H$; (ii) 
strongly pseudomonotone on $C$ if there exists a real number $\gamma>0$ such that the following implication holds, 
$$ \left\langle Ax,y-x\right\rangle \ge 0 \Longrightarrow \left\langle Ay,x-y\right\rangle\le -\gamma ||x-y||^2,~\forall x,y\in C.$$
Let $f(x,y)=\left\langle Ax,y-x\right\rangle$ for all $x,y\in H$. Then, ${\rm prox}_{\lambda f(x,.)}(x)=P_C(x-\lambda Ax)$ for all $x,y\in H$ and $\lambda>0$, 
and if $A$ is Lipschitz continuous and strongly pseudomonotone then assumptions (A1) - (A4) hold for $f$. Thus, the following corollary follows directly from 
Theorem \ref{theo1}.
\begin{corollary}\label{cor2}
Suppose that $A:H\to H$ is a strongly pseudomonotone and Lipschitz continuous operator and $x^*$ is the unique solution of problem (VIP) for $A$ on $C$. 
Let $\left\{x_n\right\}$ be a sequences generated as follows: Choose $x_0,~x_1\in C$ and for each $n$ compute $w_{n}=x_n+\theta_n(x_n-x_{n-1})$ and
$$ 
x_{n+1}=P_C(w_{n}-\lambda_{n} Aw_n),
$$
where $\left\{\lambda_n\right\}\subset (0,+\infty)$, $\left\{\theta_n\right\}\subset [0,1]$ are two sequences satisfying hypotheses {\rm (H1) - (H3)}. 
Then $\left\{x_n\right\}$ converges strongly to the unique solution $x^*$ of problem (VIP).
\end{corollary}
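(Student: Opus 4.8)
The plan is to obtain Corollary \ref{cor2} as a direct specialization of Theorem \ref{theo1}, via the standard dictionary that turns a variational inequality into an equilibrium problem. First I would set $f(x,y)=\left\langle Ax,y-x\right\rangle$ for all $x,y\in H$ and check that this choice of bifunction transports every hypothesis of the corollary into a hypothesis of the theorem, so that the conclusion can simply be read off. Once this translation is in place, no new convergence analysis is needed.

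The verification of assumptions (A1)--(A4) is the bookkeeping core. Condition (A1) is immediate since $f(x,x)=\left\langle Ax,0\right\rangle=0$. For (A2), strong pseudomonotonicity of $f$ is exactly strong pseudomonotonicity of $A$: if $f(x,y)=\left\langle Ax,y-x\right\rangle\ge 0$ then by hypothesis $\left\langle Ay,x-y\right\rangle\le -\gamma ||x-y||^2$, i.e. $f(y,x)\le -\gamma ||x-y||^2$. Condition (A3) has already been established in the discussion preceding Remark \ref{rem1}, where it is shown that a Lipschitz continuous $A$ makes $f(x,y)=\left\langle Ax,y-x\right\rangle$ satisfy (LC) with the same constant $L$. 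Finally (A4) holds because $f(x,\cdot)=\left\langle Ax,\cdot-x\right\rangle$ is affine, hence convex and lower semicontinuous, while $f(\cdot,y)$ is continuous, in particular hemicontinuous, since $A$ is Lipschitz and therefore continuous.

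Next I would identify the iteration. The key computation is the proximal identity: completing the square in
\[
{\rm prox}_{\lambda f(x,.)}(x)=\arg\min\Big\{\lambda \left\langle Ax,y-x\right\rangle+\frac{1}{2}||y-x||^2:y\in C\Big\}
\]
rewrites the objective as $\frac{1}{2}||y-(x-\lambda Ax)||^2$ up to an additive constant, so its minimizer is $P_C(x-\lambda Ax)$. Applying this with $x=w_n$ and $\lambda=\lambda_n$ shows that Step 1 of Algorithm \ref{alg1} for this $f$ reads $x_{n+1}=P_C(w_n-\lambda_n Aw_n)$, which is precisely the iteration stated in the corollary; the inertial update $w_n=x_n+\theta_n(x_n-x_{n-1})$ is unchanged.

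It remains to match the solution sets, and this is the only place requiring a word of care rather than pure substitution: a point $x^*\in C$ solves (EP) for $f$ if and only if $\left\langle Ax^*,y-x^*\right\rangle\ge 0$ for all $y\in C$, which is exactly the statement that $x^*$ solves (VIP). Since $\left\{\lambda_n\right\}$ and $\left\{\theta_n\right\}$ are assumed to satisfy (H1)--(H3) verbatim, all hypotheses of Theorem \ref{theo1} are in force, and the theorem yields strong convergence of $\left\{x_n\right\}$ to the unique solution $x^*$ of (EP), equivalently of (VIP). I do not expect any genuine obstacle here: the entire analytic content is inherited from Theorem \ref{theo1}, and the work is confined to the routine translation between the two problem formulations.
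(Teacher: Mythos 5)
Your proposal is correct and follows exactly the route the paper takes: immediately before Corollary \ref{cor2} the paper sets $f(x,y)=\left\langle Ax,y-x\right\rangle$, notes that ${\rm prox}_{\lambda f(x,.)}(x)=P_C(x-\lambda Ax)$ and that (A1)--(A4) hold when $A$ is Lipschitz continuous and strongly pseudomonotone, and then invokes Theorem \ref{theo1}. You have simply supplied the routine verifications (the completing-the-square identity, the check of (A1)--(A4), and the identification of the solution sets) that the paper leaves implicit.
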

\begin{remark}\label{rem5}
Algorithm \ref{alg1} cannot converge linearly under hypotheses (H1) and (H2). Indeed, consider our problem for $f(x,y)=x(y-x)$ for all $x,y\in C=H=\Re$ 
and Algorithm \ref{alg1} for $\theta_n=0$ and $\lambda_n\ne 1$ for all $n\ge 0$. The unique solution of the problem is $x^*=0$. From Algorithm \ref{alg1} 
we obtain $x_{n+1}=(1-\lambda_n)x_n$. Since $\lim\limits_{n\to\infty}\lambda_n =0$ and $x_n\ne 0$ for all $n\ge 0$, we have
$$ \lim\limits_{n\to\infty}\frac{||x_{n+1}-x^*||}{||x_{n}-x^*||}=\lim\limits_{n\to\infty}|1-\lambda_n|=1. $$
Thus, we cannot find any real number $\alpha\in (0,1)$ such that $||x_{n+1}-x^*||\le \alpha ||x_{n}-x^*||$ for each $n\ge 0$. This says that Algorithm \ref{alg1} 
cannot be linearly convergent. In the next section, we will establish the rate of linear convergence of Algorithm \ref{alg1} when the strongly pseudomonotone 
and Lipschitz-type constants are known.
\end{remark}
\section{Inertial regularized algorithm with prior constants}\label{IRM2}
This section also studies the convergence of Algorithm \ref{alg1} under hypotheses (A2) and (A3). However, unlike the previous section, we consider the case 
when the modulus of strong pseudomonotonicity $\gamma$ and the Lipschitz-type constant $L$ are known. In that case, we establish the rate of linear convergence 
of Algorithm \ref{alg1}. For the sake of simplicity, in Algorithm \ref{alg1}, we consider that $\lambda_n=\lambda$, $\theta_n=\theta$ for all $n\ge 0$. In order to 
obtain the rate of convergence of the algorithm, we consider the following assumptions:\\[.1in]
(H4) $0<\lambda<\min\left\{\frac{4\gamma^2}{L^2},\frac{1}{L^2}\right\}$.\\[.1in]
(H5) $0\le \theta < \min\left\{\lambda(2\gamma-L\sqrt{\lambda}),\frac{1-L\sqrt{\lambda}}{3-L\sqrt{\lambda}+2\lambda(2\gamma-L\sqrt{\lambda})}\right\}$.\\[.1in]
We have the following second main result.
\begin{theorem}\label{theo2}
Under hypotheses {\rm (A1) - (A4)} and {\rm (H4) - (H5)}, then the sequence $\left\{x_n\right\}$ generated by Algorithm \ref{alg1} converges linearly to the 
unique solution $x^*$ of problem (EP). Moreover, there exists $M>0$ such that for all $n\ge 1$, 
$$ ||x_{n+1}-x^*||\le M\alpha^n, $$
where 
$$\alpha=\sqrt{\frac{1+\theta}{1+\lambda(2\gamma-L\sqrt{\lambda})}}\in (0,1).$$
\end{theorem}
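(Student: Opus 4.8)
The plan is to specialize Lemma~\ref{lem1} to the constant parameters $\lambda_n=\lambda$, $\theta_n=\theta$ and to read the resulting estimate as a one-step contraction of a suitable energy. Writing $s_n=\|x_n-x^*\|^2$ and $t_n=\|x_n-x_{n-1}\|^2$, and abbreviating $\mu=1+\lambda(2\gamma-L\sqrt{\lambda})$, $\nu=(1-\theta)(1-L\sqrt{\lambda})$ and $\rho=\theta[\,1+\theta+(1-\theta)(1-L\sqrt{\lambda})\,]$, Lemma~\ref{lem1} becomes $\mu\,s_{n+1}\le(1+\theta)s_n-\theta s_{n-1}-\nu\,t_{n+1}+\rho\,t_n$ for all $n\ge 1$. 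Hypothesis (H4) guarantees $L\sqrt{\lambda}<1$ and $2\gamma-L\sqrt{\lambda}>0$, so $\mu>1$ and $\nu>0$; the first bound in (H5), namely $\theta<\lambda(2\gamma-L\sqrt{\lambda})$, gives $1+\theta<\mu$ and hence $\alpha^2=(1+\theta)/\mu\in(0,1)$, which already accounts for the asserted range of $\alpha$.

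Next I would introduce the Lyapunov sequence $\Gamma_n=\mu\,s_n+\nu\,t_n\ge 0$. Rearranging the specialized estimate and discarding the harmless term $-\theta s_{n-1}\le 0$ yields $\Gamma_{n+1}=\mu s_{n+1}+\nu t_{n+1}\le(1+\theta)s_n+\rho\,t_n$. Since $\alpha^2\mu=1+\theta$ by construction, the target contraction $\Gamma_{n+1}\le\alpha^2\Gamma_n=(1+\theta)s_n+\alpha^2\nu\,t_n$ follows as soon as the single scalar inequality $\rho\le\alpha^2\nu$, equivalently $\rho\,\mu\le(1+\theta)\nu$, is verified. Granting this, one gets $\Gamma_{n+1}\le\alpha^2\Gamma_n$ for every $n\ge 1$, hence $\Gamma_{n+1}\le\alpha^{2n}\Gamma_1$, and then $s_{n+1}\le\Gamma_{n+1}/\mu\le(\Gamma_1/\mu)\,\alpha^{2n}$; taking square roots gives $\|x_{n+1}-x^*\|\le M\alpha^n$ with $M:=\sqrt{\Gamma_1/\mu}$, which is exactly the claimed linear rate (and in particular strong convergence, as $\alpha\in(0,1)$).

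The one genuine computation—and the step I expect to be the main obstacle—is checking that the second bound in (H5) is precisely what forces $\rho\mu\le(1+\theta)\nu$. Here I would use the identity $1+\theta+(1-\theta)(1-L\sqrt{\lambda})=2-L\sqrt{\lambda}+\theta L\sqrt{\lambda}$, so that $\rho=\theta\,(2-L\sqrt{\lambda}+\theta L\sqrt{\lambda})$, and set $q=L\sqrt{\lambda}$, $A=\lambda(2\gamma-L\sqrt{\lambda})$ (so $\mu=1+A$). A short expansion turns $\rho\mu\le(1+\theta)\nu$ into the quadratic inequality $g(\theta):=(1-q)-\theta(1+A)(2-q)-\theta^2(1+qA)\ge 0$. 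On $[0,\infty)$ the function $g$ is strictly decreasing (both the coefficient of $\theta$ and that of $\theta^2$ are negative since $q<1$), so it suffices to evaluate it at the right endpoint $\bar\theta=\frac{1-q}{\,3-q+2A\,}$ permitted by (H5). A direct substitution shows that $g(\bar\theta)$ collapses to $\frac{2(1-q)(1+A)(1+qA)}{(3-q+2A)^2}>0$; therefore $g(\theta)>0$ for every $\theta\in[0,\bar\theta)$, which is exactly the interval prescribed by (H5). This yields $\rho\mu\le(1+\theta)\nu$ and completes the contraction.

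Two bookkeeping points finish the argument: $\nu>0$ and $\mu>1$ make $\Gamma_n$ a legitimate nonnegative energy that dominates $s_n$ through $s_n\le\Gamma_n/\mu$, and all the inequalities above hold from $n=1$ onward, so the geometric estimate $\Gamma_{n+1}\le\alpha^{2n}\Gamma_1$ is valid for all $n\ge 1$, as required. Nothing beyond Lemma~\ref{lem1}, hypotheses (H4)--(H5), and the elementary quadratic analysis of $g$ is needed.
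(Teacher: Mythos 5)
Your argument is correct and is essentially the paper's proof: your energy $\Gamma_n=\mu s_n+\nu t_n$ is just $\mu$ times the paper's quantity $\|x_n-x^*\|^2+B\|x_n-x_{n-1}\|^2$, your contraction factor $\alpha^2=(1+\theta)/\mu$ is the paper's $A$, and your key scalar inequality $\rho\mu\le(1+\theta)\nu$ is exactly the paper's condition $AB\ge C$. The only (cosmetic) difference is how that inequality is verified: the paper deduces it from (H5) via the chain $2\theta\mu\le(1-\theta)(1-L\sqrt{\lambda})$, $1-\theta\le 1-\theta^2$ and $2\theta\ge\rho$, whereas you analyze the quadratic $g(\theta)$ directly and evaluate it at the endpoint $\bar\theta$ --- a slightly sharper but equivalent computation.
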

\begin{proof}
It follows from Lemma \ref{lem1} with $\lambda_n=\lambda$, $\theta_n=\theta$ for all $n\ge 0$ that 
\begin{eqnarray*}
&&(1+\lambda(2\gamma-L\sqrt{\lambda}))||x_{n+1}-x^*||^2+(1-\theta)(1-L\sqrt{\lambda})||x_{n+1}-x_n||^2\\
&&\le(1+\theta)||x_n-x^*||^2+\theta\left[1+\theta+(1-\theta)(1-L\sqrt{\lambda})\right]||x_n-x_{n-1}||^2.
\end{eqnarray*}
Dividing both two sides of the last inequality by $1+\lambda(2\gamma-L\sqrt{\lambda})>0$, we obtain that
\begin{equation}\label{eq:11}
||x_{n+1}-x^*||^2+B ||x_{n+1}-x_n||^2\le A ||x_n-x^*||^2+C ||x_n-x_{n-1}||^2,
\end{equation}
where
\begin{eqnarray}
&&A=\frac{1+\theta}{1+\lambda(2\gamma-L\sqrt{\lambda})},~ B=\frac{(1-\theta)(1-L\sqrt{\lambda})}{1+\lambda(2\gamma-L\sqrt{\lambda})},\label{eq:12}\\
&& C= \frac{\theta\left[1+\theta+(1-\theta)(1-L\sqrt{\lambda})\right]}{1+\lambda(2\gamma-L\sqrt{\lambda})}.\label{eq:13}
\end{eqnarray}
Under hypotheses (H4) - (H5), we see that $A>0$, $B>0$ and $C\ge 0$. Relation (\ref{eq:11}) can be rewritten as follows:
\begin{eqnarray}\label{eq:14}
||x_{n+1}-x^*||^2+B ||x_{n+1}-x_n||^2&\le& A \left(||x_n-x^*||^2+B ||x_n-x_{n-1}||^2\right)\nonumber\\
&&-(AB-C)||x_n-x_{n-1}||^2.
\end{eqnarray}
Now, under hypothesis (H4) and (H5) we will imply that $AB\ge C$. Indeed, it follows from (H5) that $\theta (3-L\sqrt{\lambda}
+2\lambda(2\gamma-L\sqrt{\lambda}))\le1-L\sqrt{\lambda}$. Thus, since  $3-L\sqrt{\lambda}=(1-L\sqrt{\lambda})+2$, we obtain 
$2\theta(1+\lambda(2\gamma-L\sqrt{\lambda}))\le (1-\theta)(1-L\sqrt{\lambda})$. This together with the fact $1-\theta\le 1-\theta^2$ implies that 
$2\theta(1+\lambda(2\gamma-L\sqrt{\lambda}))\le (1-\theta^2)(1-L\sqrt{\lambda})=(1+\theta)(1-\theta)(1-L\sqrt{\lambda})$. Multiplying both 
two sides of this inequality by $\frac{1}{(1+\lambda(2\gamma-L\sqrt{\lambda}))^2}$, we come to the following estimate
$$ \frac{(1+\theta)(1-\theta)(1-L\sqrt{\lambda})}{\left(1+\lambda(2\gamma-L\sqrt{\lambda})\right)^2}\ge \frac{2\theta}{1+\lambda(2\gamma-L\sqrt{\lambda})}.$$
Thus, since $2\theta=\theta\left[1+\theta+(1-\theta)\right]\ge \theta\left[1+\theta+(1-\theta)(1-L\sqrt{\lambda})\right]$, one has 
$$ \frac{(1+\theta)(1-\theta)(1-L\sqrt{\lambda})}{\left(1+\lambda(2\gamma-L\sqrt{\lambda})\right)^2} 
\ge\frac{\theta\left[1+\theta+(1-\theta)(1-L\sqrt{\lambda})\right]}{1+\lambda(2\gamma-L\sqrt{\lambda})}.$$
This together with the definitions of $A,~B,~C$ is equivalent to the inequality $AB\ge C$ or $A B-C\ge 0$. Thus, from relation (\ref{eq:14}), we obtain 
\begin{eqnarray*}
||x_{n+1}-x^*||^2+B ||x_{n+1}-x_n||^2&\le& A \left(||x_n-x^*||^2+B ||x_n-x_{n-1}||^2\right).
\end{eqnarray*}
Therefore, we obtain by the induction that 
\begin{eqnarray*}
||x_{n+1}-x^*||^2+B ||x_{n+1}-x_n||^2&\le& A^n \left(||x_1-x^*||^2+B ||x_1-x_{0}||^2\right),~\forall n\ge 0.
\end{eqnarray*}
Thus $||x_{n+1}-x^*||^2\le A^n \left(||x_1-x^*||^2+B ||x_1-x_{0}||^2\right)$, i.e.,
\begin{eqnarray*}
||x_{n+1}-x^*||\le M \alpha^n,
\end{eqnarray*}
where $M=\sqrt{||x_1-x^*||^2+B ||x_1-x_{0}||^2}$ and 
$$\alpha=\sqrt{A}=\sqrt{\frac{1+\theta}{1+\lambda(2\gamma-L\sqrt{\lambda})}}.$$
Note that from hypothesis (H5) we obtain that $\alpha\in (0,1)$. Theorem \ref{theo2} is proved.
\end{proof}
\begin{remark}\label{rem6}
In view of Remark \ref{rem4} and the proofs of Theorem \ref{theo1} and \ref{theo2}, we can establish the same convergence results for 
equilibrium problem (EP) with the Lipschitz-type condition (MLC) in \cite{M2000} under some suitable conditions imposed on stepsize as well as inertial 
parameter. It is worth mentioning that from the left-hand side of inequality (\ref{NE}), we always need the condition $\gamma>c_2$. This 
condition was also used in the regularized method, see, e.g., \cite[Corollary 2.1]{MQ2009}. We leave the proof in details to the readers.
\end{remark}
\section{Numerical illustrations}\label{example}
This section presents several experiments to illustrate the numerical behavior of the proposed algorithm - IRA (Algorithm \ref{alg1}) with different 
parameters, and also to compare with three other algorithms having the same features, namely the regularized algorithm - RA (see, Corollary \ref{cor1}), 
the extragradient method (EGM) presented in \cite[Algorithm 1]{H2017AA} and the modified extragradient method (M-EGM) proposed in 
\cite[Algorithm 3.1]{H2017NUMA}. As in \cite{H2017AA,H2017NUMA}, the main advantage of Algorithm \ref{alg1} is that it can be done without 
the prior knowledge of strongly pseudomonotone and Lipschitz-type constants of cost bifunction. This, as mentioned above, comes from the use of 
sequences of stepsizes being diminishing and non-summable. We use the following five sequences of stepsizes,
$$ \lambda_n=\frac{1}{(n+1)^p},~p=1,~0.7, ~0.5,~0.3,~0.1, $$
and five inertial parameters as $\theta_n\in \left\{0.1,~0.15,~0.2,~0.25,~0.3\right\}$. In the case, when the solution of the problem is unknown we use the function 
$$D(x)=||x-\mbox{\rm prox}_{\lambda f(x,.)}(x)||^2$$
for some $\lambda>0$ to describe and compare the computational performance of all the algorithms. Note that if $D(x)=0$ then $x$ is the solution of 
the problem. Otherwise, if the solution $x^*$ of the problem is known, we use the function $E(x)=||x-x^*||^2$. All the programs are written in Matlab 7.0 and computed 
on a PC Desktop Intel(R) Core(TM) i5-3210M CPU @ 2.50GHz, RAM 2.00 GB. 
\subsection{Numerical behavior of Algorithm \ref{alg1}}
This subsection studies the numerical behavior of Algorithm \ref{alg1} on two test problems for different control parameters. The followings 
are the examples in details. \\[.1in]
\textbf{Example 1.} In this example, we consider a test problem generalized from the Nash-Cournot oligopolistic equilibrium model in 
\cite{CKK2004,FP2002} with the price and fee-fax functions being affine. The test problem is descibed as follows (also, see 
\cite{H2017AA,H2017NUMA,HMA2016}): 
Assume that there are $m$ companies that produce a commodity. Let $x$ denote the vector whose entry $x_j$ stands for the quantity  of
 the commodity produced  by company $j$. We suppose that the price $p_j(s)$ is a decreasing affine function of $s$ with $s= \sum_{j=1}^m x_j$, 
i.e., $p_j(s)= \alpha_j - \beta_j s$, where $\alpha_j > 0$, $\beta_j > 0$. Then the profit made by company $j$ is given by $f_j(x)= p_j(s) x_j -
c_j( x_j)$, where $c_j(x_j)$ is the tax and fee for generating $x_j$. Suppose that $C_j=[x_j^{\min},x_j^{\max}]$ is the strategy set of company $j$, then the
strategy set of the model is $C:= C_1\times C_2 \times ...\times C_m$. Actually, each company seeks to  maximize its profit by choosing the
corresponding production level under the presumption that the production of the other companies is a parametric input.
 A commonly used approach to this model is based upon the famous Nash equilibrium concept. We recall  that a point $x^* \in C=C_1\times C_2
\times\cdots\times C_m$ is  an equilibrium point of the model   if
    $$f_j(x^*) \geq f_j(x^*[x_j]) \ \forall x_j \in C_j, \ \forall  j=1,2,\ldots,m,$$
    where the vector $x^*[x_j]$ stands for the vector obtained from
    $x^*$ by replacing $x^*_j$ with $x_j$.
By taking $f(x, y):= \psi(x,y)-\psi(x,x)$ with $\psi(x,y):=  -\sum_{j = 1}^m f_j(x[y_j])$, 
the problem of finding a Nash equilibrium point of the model can be formulated as:
$$\mbox{Find}~x^* \in C~\mbox{such that}~ f(x^*,x) \geq 0 ~ \forall x \in C. $$
Now, assume that the tax-fee function $c_j(x_j)$ is increasing and affine for every $j$. This assumption means that both of the tax and fee  
for producing a unit are increasing as the quantity of the production gets larger. In that case, the bifunction $f$ can be formulated in 
the form 
$$f(x,y)=\left\langle Px+Qy+q,y-x\right\rangle,$$
where $q\in \Re^m$ and $P,~Q$ are two matrices of order $m$ such that $Q$ is symmetric positive semidefinite and $Q-P$ is symmetric 
negative semidefinite. However, unlike in \cite{H2016c,H2016e,H2017,QMH2008}, we consider here that $Q-P$ is symmetric 
negative \textit{definite}. From the property of $Q-P$, 
if $f(x,y)\ge 0$, we have 
\begin{eqnarray*}
f(y,x)&\le&f(y,x)+f(x,y)= (x-y)^T(Q-P)(x-y)\le-\gamma ||x-y||^2,
\end{eqnarray*}
where some $\gamma>0$. This shows that $f$ is strongly pseudomonotone, i.e., (A2) holds for $f$. Also, from the symmetric property of $Q$ 
and a straightforward computation, 
we obtain $f(x,y)+f(y,z)-f(x,z)=(y-x)^T(P-Q)(z-y)\ge -||P-Q|| ||y-x|| ||z-y||$. Thus, $f$ satisfies the condition (LC). The hypotheses (A1) and (A4) 
are automatically satisfied. A more general form of the bifunction $f$ above has been presented in \cite{QMH2008} and hypotheses (A2) and 
(A3) were also implied in details in \cite[Lemmas 6.1 and 6.2]{QMH2008}. Then, Algorithm \ref{alg1} can be applied in this case. For experiments, our problem is done in 
$\Re^{m}$ with $m=100$; the feasible set is a polyhedral set, given by 
$$C=\left\{x\in \Re^m_+:Ax\le b\right\},$$ 
where $A$ is a random maxtrix of size $l\times m$ with $l=10$, and the vector $b\in \Re^l_+$ is chosen such that the following point $x_0\in C$. 
The starting points are $x_0=x_1=(1,1,\ldots,1)^T\in \Re^m$. The datas are as follows: all the entries of $q$ is generated randomly 
and uniformly in $(-2,2)$ and the two matrices $P,~Q$ are also 
generated randomly\footnote{We randomly choose $\lambda_{1k}\in (-2,0),~\lambda_{2k}\in (0,2),~ k=1,\ldots,m$. We set $\widehat{Q}_1$, $\widehat{Q}_2$ 
as two diagonal matrixes with eigenvalues $\left\{\lambda_{1k}\right\}_{k=1}^m$ and $\left\{\lambda_{2k}\right\}_{k=1}^m$, respectively. Then, we 
construct a positive semidefinite matrix $Q$ and a negative definite matrix $T$ by using random orthogonal matrixes with $\widehat{Q}_2$ 
and $\widehat{Q}_1$, respectively. Finally, we set $P=Q-T$} such that their conditions hold. All the optimization subproblems are effectively solved 
by the function \textit{quadprog} in Matlab. Figs. \ref{fig1} - \ref{fig4} show the numerical behavior of algorithm IRA in this example for several different 
inertial parameters and sequences of stepsizes.
\begin{figure}[!ht]
\begin{minipage}[b]{0.45\textwidth}
\centering
\includegraphics[height=5cm,width=6cm]{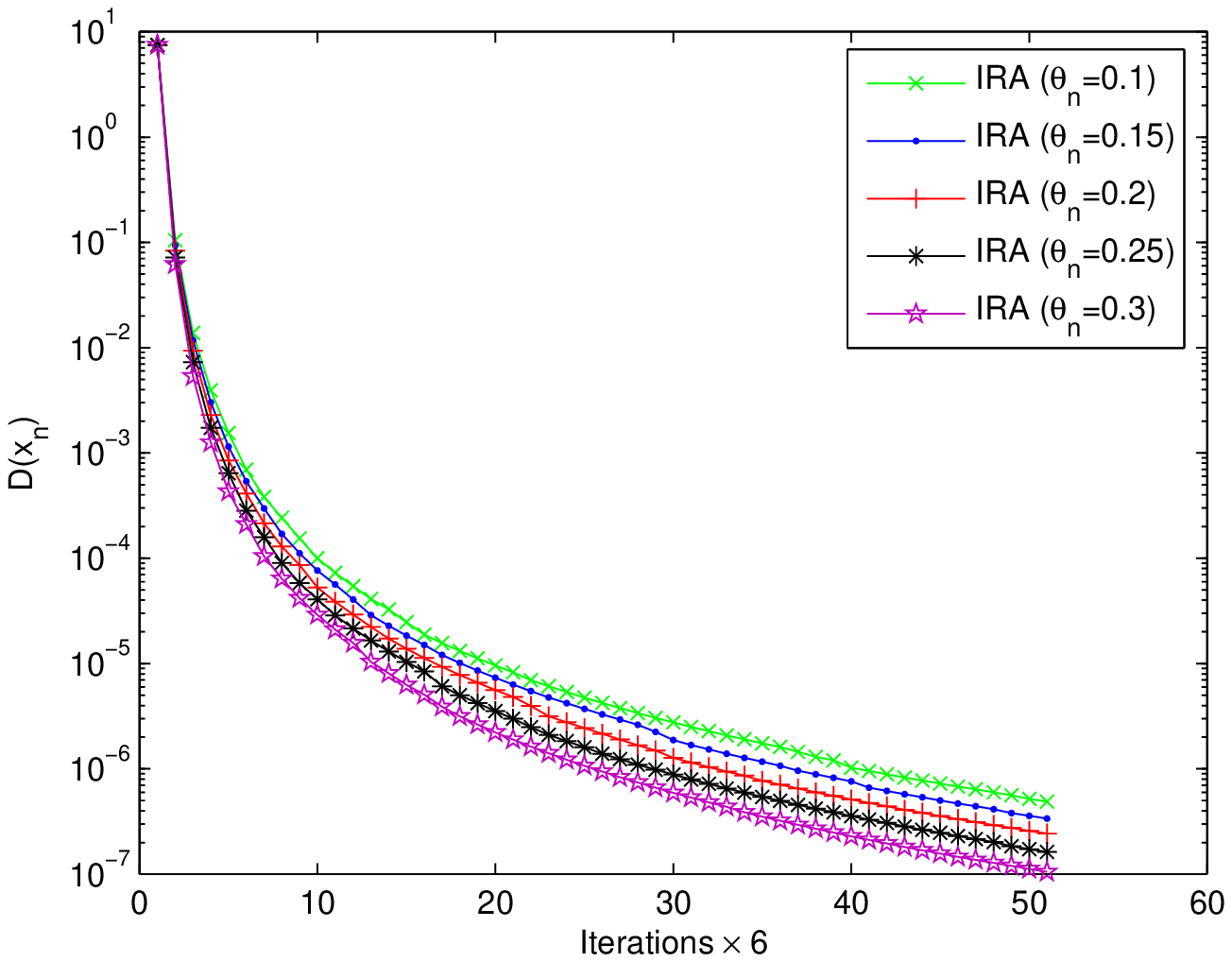}
\caption{Example 1 with $\lambda_n=\frac{1}{n+1}$.}\label{fig1}
\end{minipage}
\hfill
\begin{minipage}[b]{0.45\textwidth}
\centering
\includegraphics[height=5cm,width=6cm]{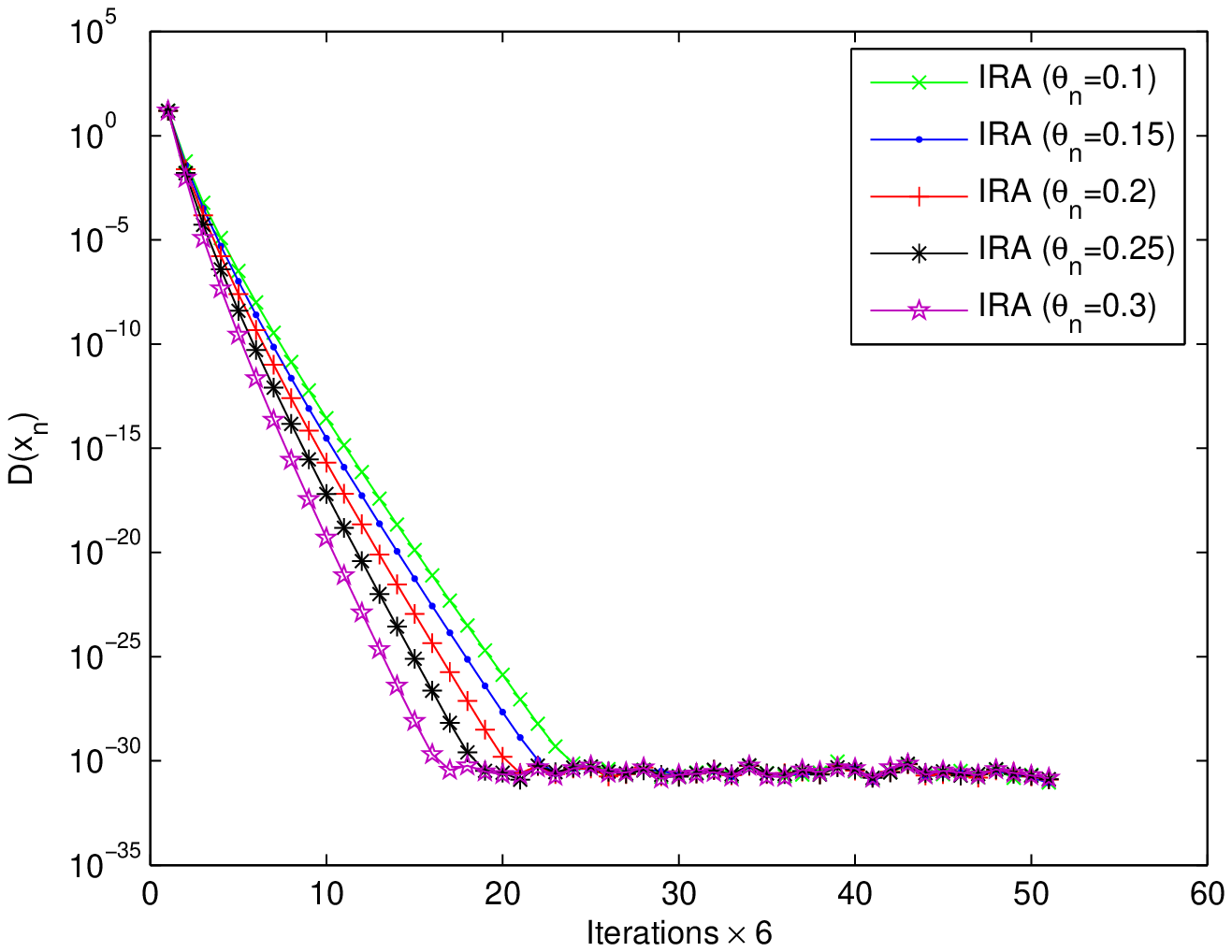}
\caption{Example 1 with $\lambda_n=\frac{1}{\sqrt[10]{n+1}}$.}\label{fig2}
\end{minipage}
\end{figure}

\begin{figure}[!ht]
\begin{minipage}[b]{0.45\textwidth}
\centering
\includegraphics[height=5cm,width=6cm]{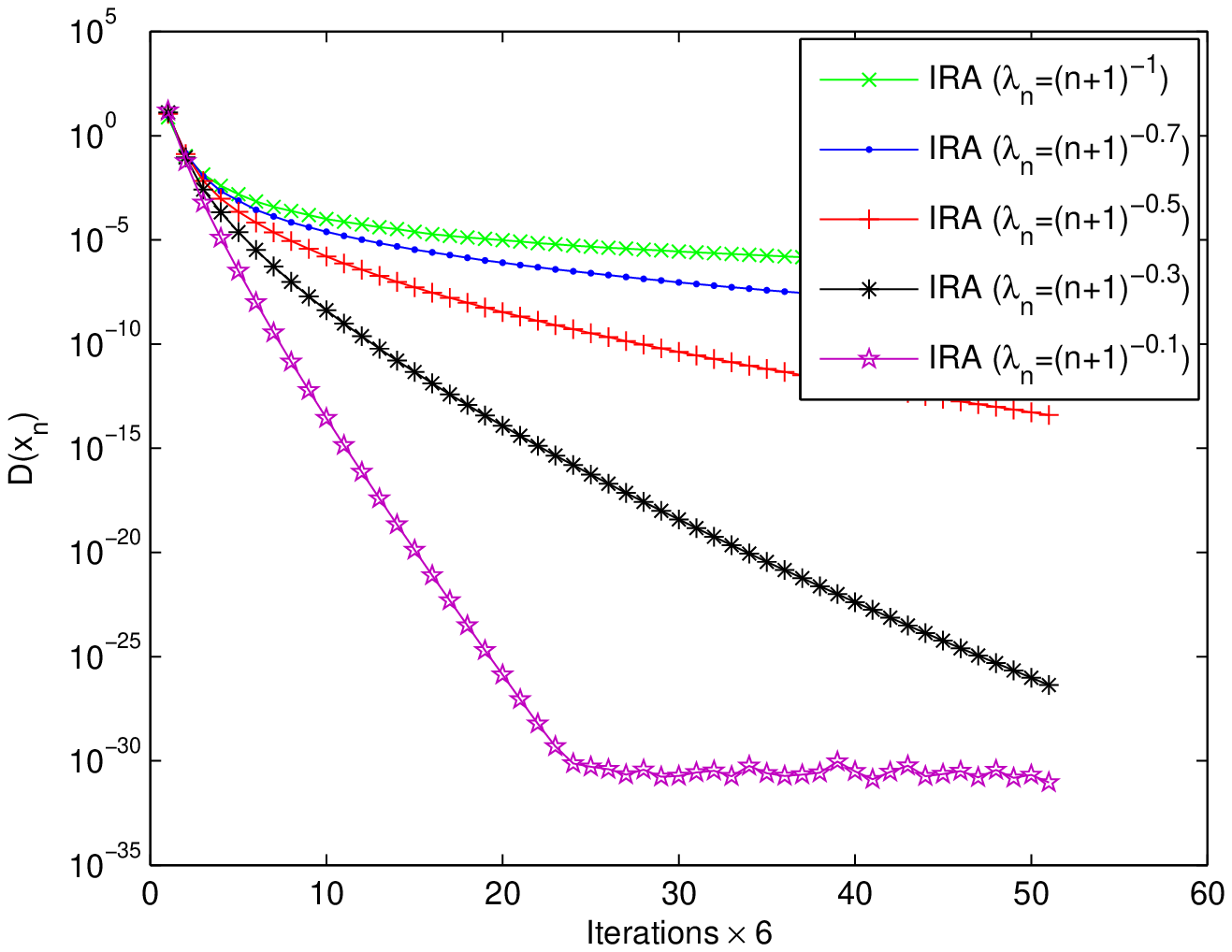}
\caption{Example 1 with $\theta_n=0.1$.}\label{fig3}
\end{minipage}
\hfill
\begin{minipage}[b]{0.45\textwidth}
\centering
\includegraphics[height=5cm,width=6cm]{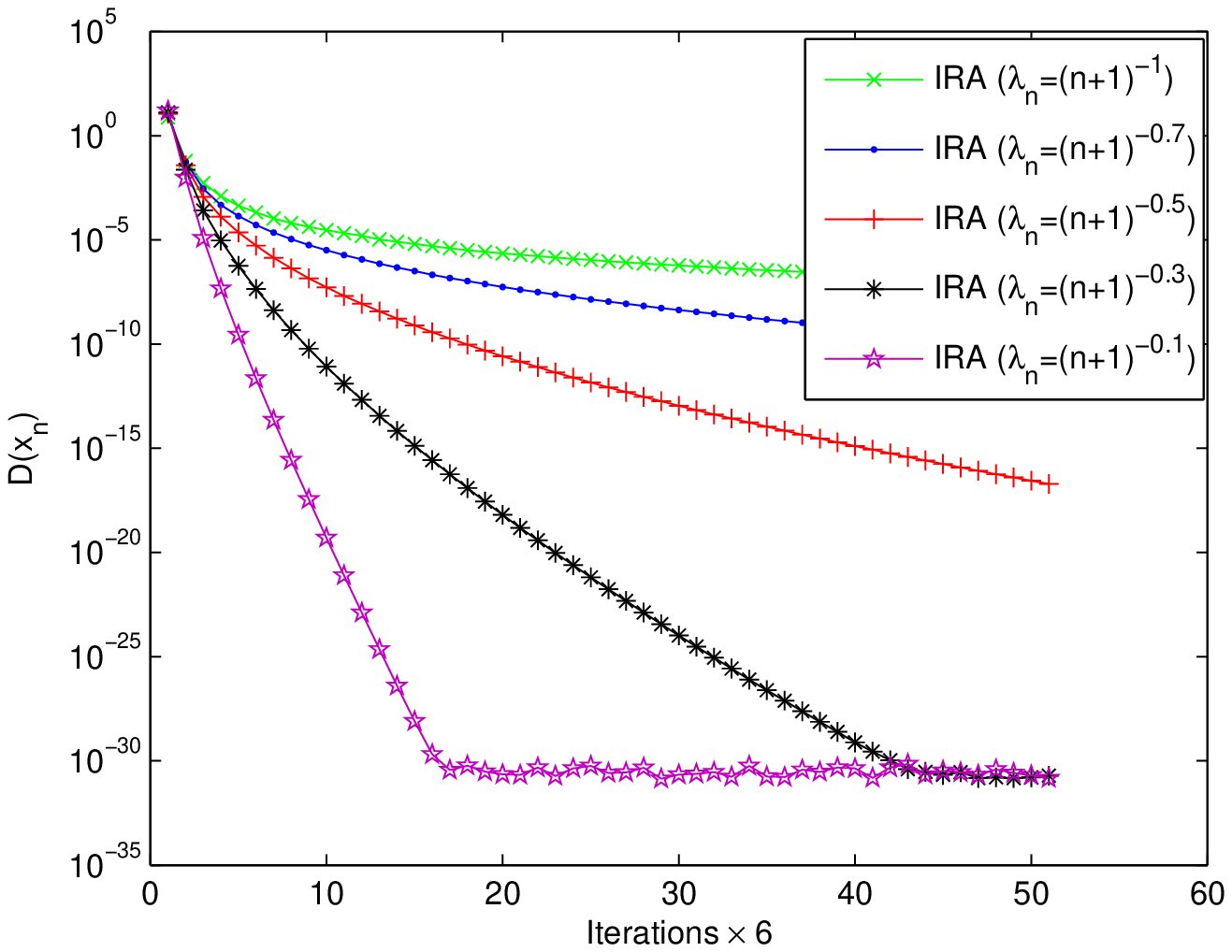}
\caption{Example 1 with $\theta_n=0.3$.}\label{fig4}
\end{minipage}
\end{figure}

\noindent\textbf{Example 2.} Now, consider the equilibrium problem in the Hilbert space $H=L^2[0,1]$ with the inner product 
$\left\langle x, y\right\rangle = \int_0^1 x(s)y(s)ds$ and the induced norm $||x||^2=\int_0^1 x^2(s)ds$. The feasible set $C$ is the 
unit ball $\mbox{B}[0,1]$ and the bifunction $f$ is of the form $f(x,y)=\left\langle Ax,y-x\right\rangle$ with the operator $A:H\to 
H$ defined by
\begin{equation}\label{eq:45}
A(x)(t)=\int_{0}^1 \left[x(t)-F(t,s)f(x(s))\right]ds+g(t),~x\in H,~t\in [0,1],
\end{equation}
where
$$
F(t,s)=\frac{2tse^{t+s}}{e\sqrt{e^2-1}},~ f(x)=\cos x,~ g(t)=\frac{2te^t}{e\sqrt{e^2-1}}. 
$$
Note that $g(t)$ is chosen such that $x^*(t)=0$ is the solution of the problem. Since the mapping $S(x)(t)=\int_{0}^1 F(t,s)f(x(s))ds$ 
is Fr$\rm \acute{e}$chet differentiable and $||S'(x)h||\le ||x||||h||$ for all $x,~h\in H$. Thus, a straightforward computation implies that 
$f$ is monotone (so, pseudomonotone) and satisfies the Lipschitz-type condition. We do not know whether $f$ is strongly pseudomonotone 
or not?!, but we still wish to make numerical experiments for this example, and a fact that if yes, we also do not need to know the Lipschitz-type 
and strongly pseudomonotone constants of $f$. All the optimization problems in the algorithms are reduced to the projections on $C$ which 
are explicitly computed. The integral in (\ref{eq:45}) and others are computed by the trapezoidal formula with the stepsize $\tau=0.001$. The 
starting points are $x_0(t)=x_1(t)=t+0.5\cos t$. The numerical results are described in Figures \ref{fig5} - \ref{fig8}.

\begin{figure}[!ht]
\begin{minipage}[b]{0.45\textwidth}
\centering
\includegraphics[height=5cm,width=6cm]{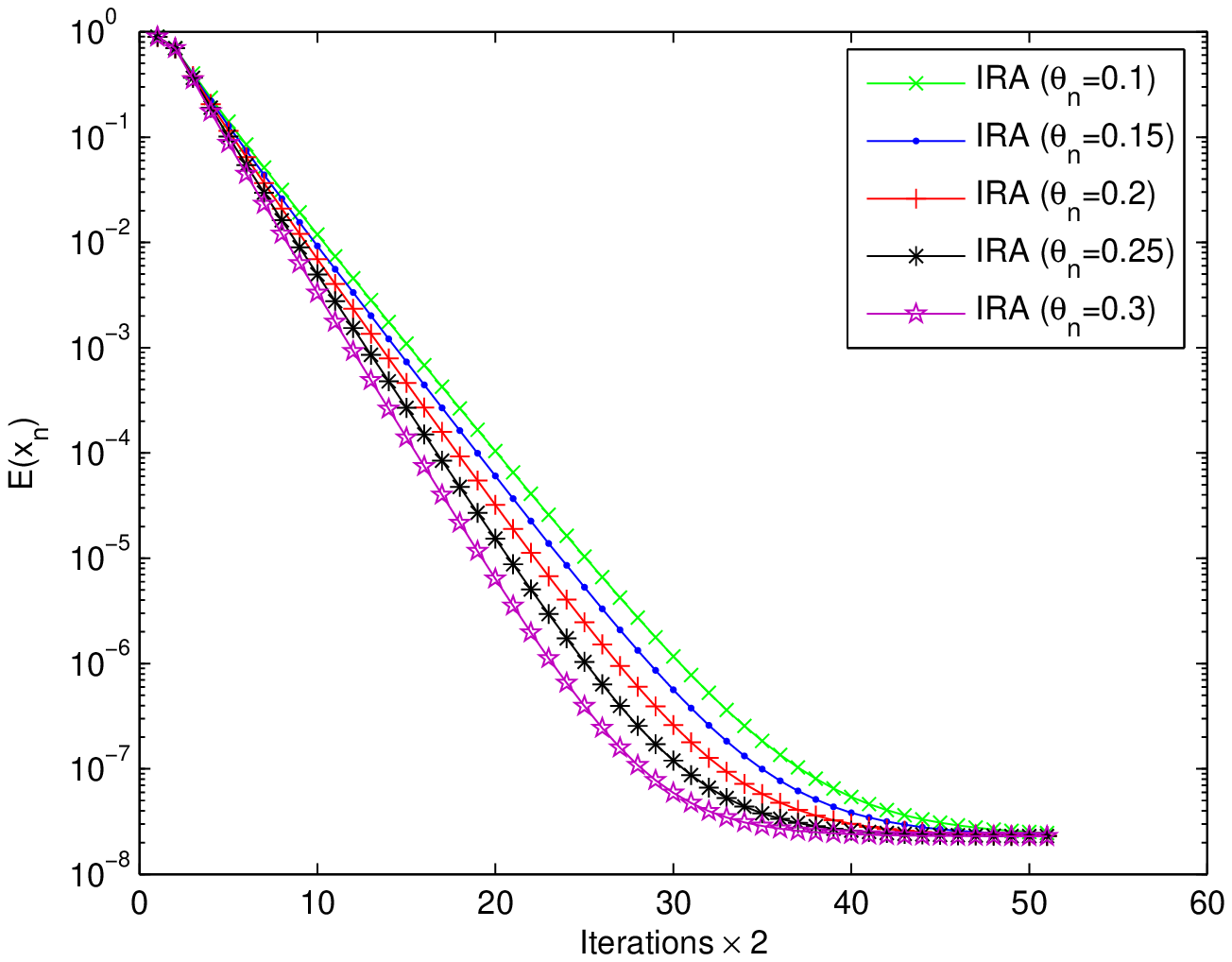}
\caption{Example 2 with $\lambda_n=\frac{1}{n+1}$.}\label{fig5}
\end{minipage}
\hfill
\begin{minipage}[b]{0.45\textwidth}
\centering
\includegraphics[height=5cm,width=6cm]{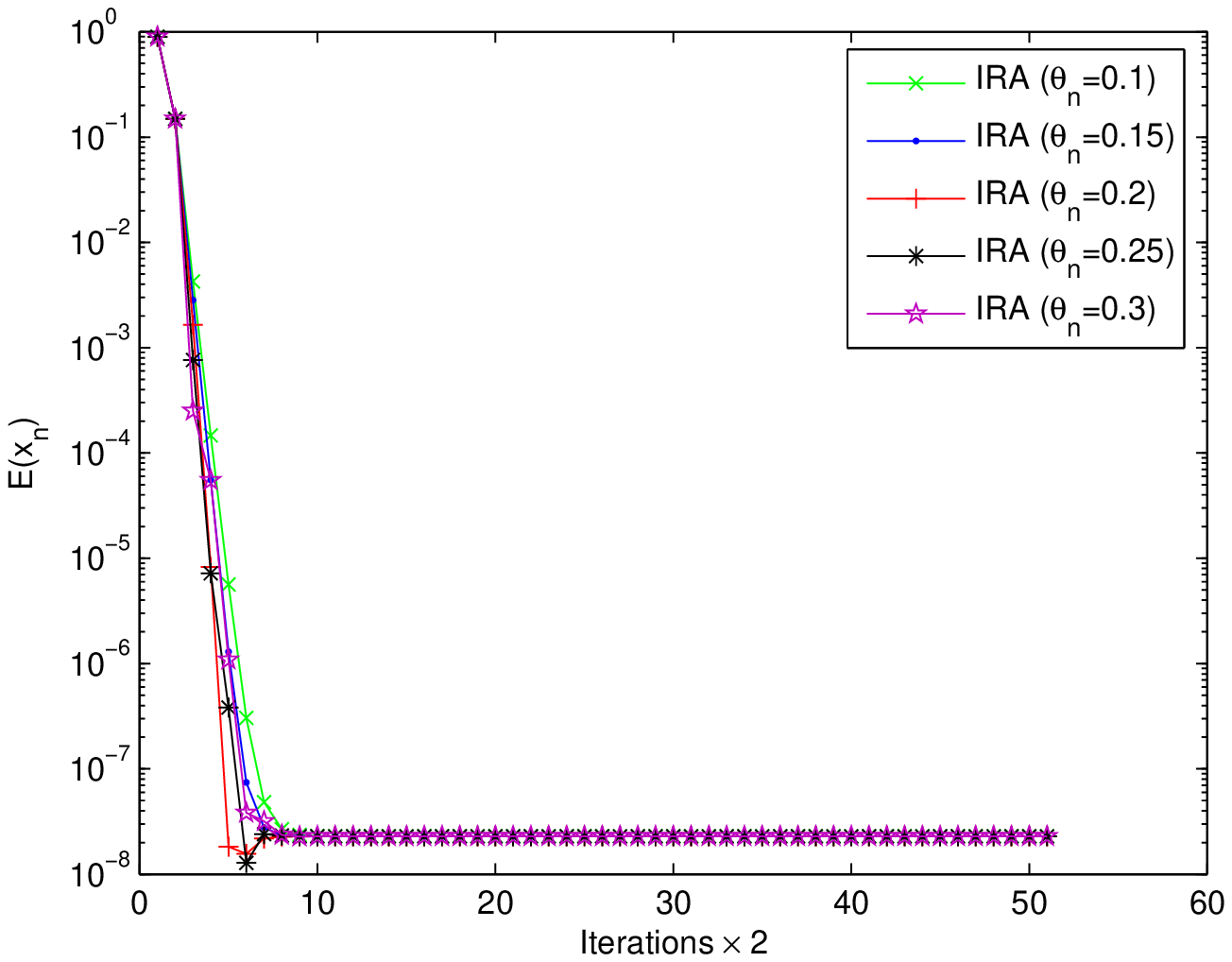}
\caption{Example 2 with $\lambda_n=\frac{1}{\sqrt[10]{n+1}}$.}\label{fig6}
\end{minipage}
\end{figure}
\begin{figure}[!ht]
\begin{minipage}[b]{0.45\textwidth}
\centering
\includegraphics[height=5cm,width=6cm]{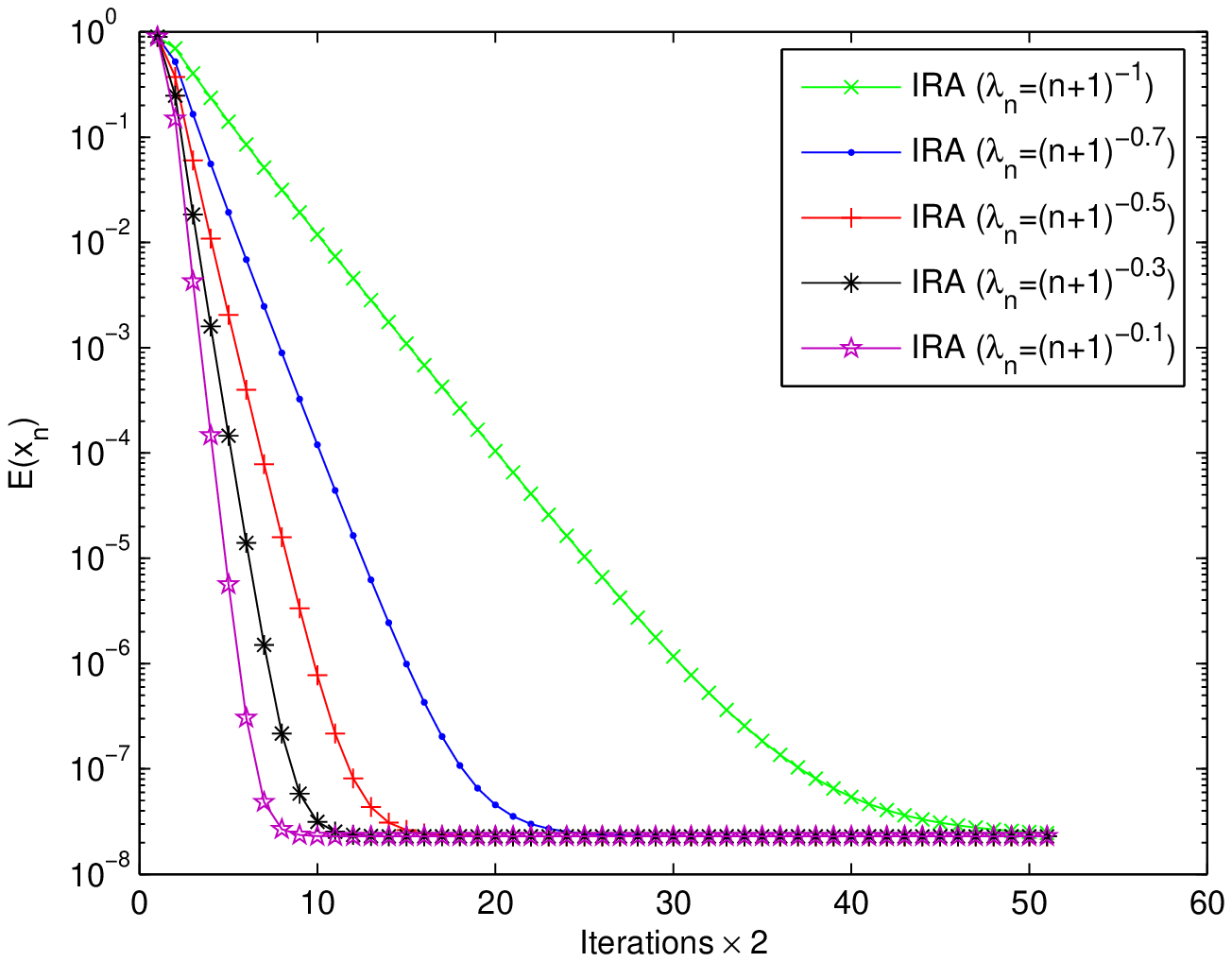}
\caption{Example 2 with $\theta_n=0.1$.}\label{fig7}
\end{minipage}
\hfill
\begin{minipage}[b]{0.45\textwidth}
\centering
\includegraphics[height=5cm,width=6cm]{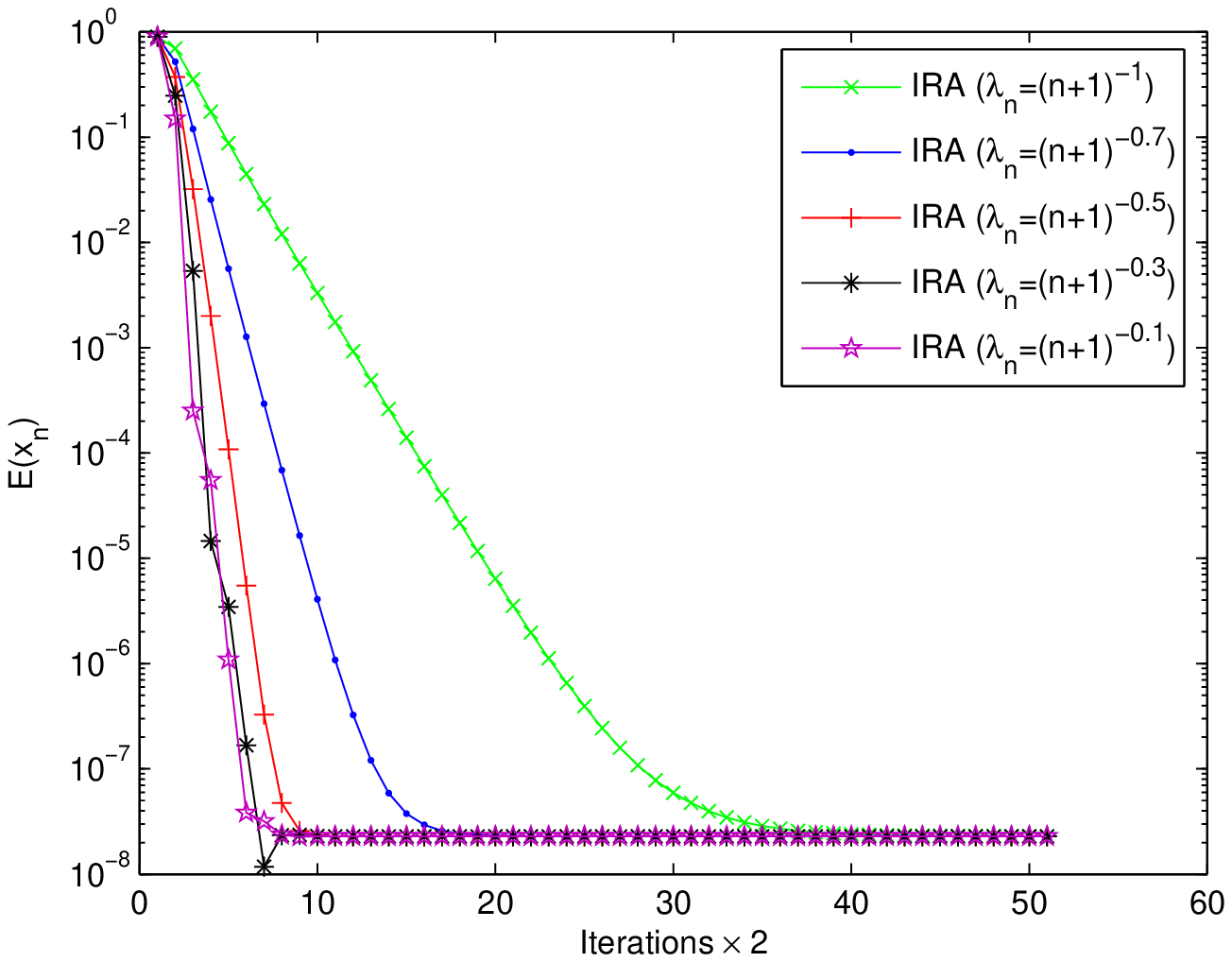}
\caption{Example 2 with $\theta_n=0.3$.}\label{fig8}
\end{minipage}
\end{figure}
From the aforementioned results, we see that the convergence rate of algorithm IRA depends strictly on the convergence rate of the sequence of stepsize $\lambda_n$, 
and that algorithm IRA seems to work better when $\lambda_n$ is more slowly diminishing, and also when inertial parameter $\theta_n$ is larger. For example, in view 
of Figures \ref{fig1} and \ref{fig2}, after the first $300$ iterations, the sequence $D(x_n)$ generated by algorithm IRA with $\lambda_n=\frac{1}{n+1}$ approximates 
$10^{-7}$ while that one with $\lambda_n=(n+1)^{-0.1}=\frac{1}{\sqrt[10]{n+1}}$ is $10^{-30}$. 
\subsection{Compare Algorithm \ref{alg1} with others}
In this part, we present several experiments in comparisons algorithm IRA with others. As mentioned above, we will compare 
algorithm IRA with three algorithms having the same features as RA, EGM and M-EGM. In comparisons, we use $\theta_n=0.3$ 
for algorithm IRA, and $\lambda_n=\frac{1}{n+1}$ or $\lambda_n=\frac{1}{\sqrt[10]{n+1}}$ for all the algorithms. The starting 
points are the same as in the previous part. \\[.1in]
Table \ref{tab1} reports the numerial results for Example 1. In this example, since the 
solution of problem is unknown, we have used the stopping criterion as $D(x_n)\le \mbox{\rm TOL}$. As in \cite{H2017AA,H2017NUMA} 
and the previous experiments, it is seen that the convergence rate of the algorithms depends strictly on the convergence rate of sequence of 
stepsize $\lambda_n$. So, we choose here the different tolerance TOL which is based the choice of $\lambda_n$. The comparisons 
include the number of iterations (Iter.) and the execution time in second (CPU(s)). \\[.1in]
\begin{table}[ht]\caption{A comparison between algorithm IRA (with $\theta_n=0.3$) and others in Example 1}\label{tab1}
\medskip\begin{center}
\begin{tabular}{|c|c|c|c|c|c|c|c|c|c|c|}
 \cline{4-11}
 \multicolumn{3}{c}{} & \multicolumn{2}{|c|}{IRA (Alg. $\ref{alg1}$)} &\multicolumn{2}{c|}{RA}&\multicolumn{2}{c|}{EGM}&\multicolumn{2}{c|}{M-EGM}
\\ \hline
 $\lambda_n$& m&TOL& CPU(s)& Iter. & CPU(s)& Iter.& CPU(s)& Iter.& CPU(s)& Iter.\\ \hline
&50&$10^{-4}$&0.76&29&    1.17&47&    2.26&48&    2.34&47\\  \cline{3-11}
&&$10^{-6}$&3.47&109&    7.48&214&   14.35&221&   15.16&218\\ \cline{2-11}
$\frac{1}{n+1}$&70&$10^{-4}$&1.45&34&    2.02&54&    4.25&56&    4.41&55\\\cline{3-11}
&&$10^{-6}$&7.02&131&   14.25&256&   27.30&263&   28.32&260\\  \cline{2-11}
&100&$10^{-4}$&3.54&   37&    5.59&   64&    9.88&   67&9.83&66\\\cline{3-11}
&&$10^{-6}$&16.66&   148&    31.07&   293&    62.75&   299&66.16&297\\  
\hline
&50&$10^{-20}$&2.69&55&    4.57&95&    9.60&104&   10.09&104\\  \cline{3-11}
&&$10^{-25}$&3.41&72&    6.11&123&   11.51&134&   11.59&134\\ \cline{2-11}
$\frac{1}{\sqrt[10]{n+1}}$&70&$10^{-20}$&4.10&53&   7.34&92&   16.67&102&   17.64&102\\\cline{3-11}
&&$10^{-25}$&5.70&68&   10.81&118&   26.41&131&   25.35&131\\  \cline{2-11}
&100&$10^{-20}$&8.76&   57&    14.16&   97&    30.62&   107&30.52&106\\\cline{3-11}
&&$10^{-25}$&11.23&74&   18.95&126&   40.76&137&   40.23&137\\  
\hline
 \end{tabular}\end{center}
\end{table}

\noindent Table \ref{tab2} shows the results for Example 2. The stopping criterion is used here as $E(x_n)\le \mbox{\rm TOL}$. In view of Tables \ref{tab1} 
and \ref{tab2}, we see that algorithm IRA works the best in both number of iterations and execution time. Also, it is worth mentioning that algorithm 
IRA with inertial effects is better than the regularized algorithm RA which works without inertial term.

\begin{table}[ht]\caption{A comparison between algorithm IRA (with $\theta_n=0.3$) and others in Example 2}\label{tab2}
\medskip\begin{center}
\begin{tabular}{|c|c|c|c|c|c|c|c|c|c|c|}
 \cline{3-10}
 \multicolumn{2}{c}{} & \multicolumn{2}{|c|}{IRA (Alg. $\ref{alg1}$)} &\multicolumn{2}{c|}{RA}&\multicolumn{2}{c|}{EGM}&\multicolumn{2}{c|}{M-EGM}
\\ \hline
 $\lambda_n$& TOL& CPU(s)& Iter. & CPU(s)& Iter.& CPU(s)& Iter.& CPU(s)& Iter.\\ \hline
$\frac{1}{n+1}$&$10^{-5}$&4.21&38&    6.44&56&  13.43&63&    9.03&63\\\cline{2-10}
&$10^{-7}$&7.41&55&   10.04&83&   22.74&92&   14.10&92\\  \cline{2-10}
\hline
$\frac{1}{\sqrt[10]{n+1}}$&$10^{-5}$&0.68&8&    0.87&10&    3.72&23&    1.98&16\\\cline{2-10}
&$10^{-7}$&0.88&10&    1.35&14&    5.74&33&    2.94&24\\  \cline{2-10}
\hline
 \end{tabular}\end{center}
\end{table}
\begin{remark}
The rate of convergence proved in Theorem \ref{theo2} shows that the smaller is the inertial parameter $\theta$, the smaller is the parameter $\alpha$ of the rate. 
Then, the convergence rate is better when the inertial parameter is not used, i.e., when $\theta=0$. This contradicts the numerical experiments presented in this section 
where the algorithm is considered with the sequence $\left\{\lambda_n\right\}$. This can be due to our \textit{bad} choice of the rate parameter $\alpha$ 
(depends on $\theta$) which originates from the analyzied techniques in the paper. This also suggests for a forthcoming work to study and reanalyze Algorithm \ref{alg1} 
where we can choose a function $\alpha=\alpha(\theta)$ which optimizes the convergence rate of the algorithm.
\end{remark}
\section{Conclusions} 
The paper has proposed a new inertial regularized algorithm for solving strongly pseudomonotone and Lipschitz-type equilibrium problems. The algorithm is a 
combination between the proximal-like regularized technique and inertial effects. By using a sequence of stepsizes being diminishing and non-summable, the proposed 
algorithm can be done without the prior knowledge of the modulus of strong pseudomonotonicity and the Lipschitz-type constant of cost bifunction. Theorem of 
strong convergence has been proved. In the case, when those constants are known, we have established the rate of linear convergence of the algorithm. Several numerical 
results have been reported to illustrate the computational performance of the algorithm in comparisons with other algorithms. These numerical results have also 
confirmed that the algorithm with inertial effects seems to work better than without inertial effects.
\section*{Disclosure statement}
No potential conflict of interest was reported by the author.

\begin{acknowledgements}
The author would like to thank the Associate Editor and two anonymous referees for their valuable comments and suggestions 
which helped us very much in improving the original version of this paper. This work is supported by Vietnam National Foundation 
for Science and Technology Development (NAFOSTED) under the project: 101.01-2017.315.
\end{acknowledgements}

\end{document}